\newcommand{\C}{\ensuremath{\mathbb{C}}}
\newcommand{\R}{\ensuremath{\mathbb{R}}}
\renewcommand\epsilon\varepsilon
\theoremstyle{definition}
\newtheorem{thm}{Theorem}[section]
\newtheorem{dfn}[thm]{Definition}
\newtheorem{lem}[thm]{Lemma}
\newtheorem{prp}[thm]{Proposition}
\newtheorem{rem}[thm]{Remark}
\author{Tim de Laat}
\thanks{2010 Mathematics Subject Classification: 22D25 (22E46, 43A90).\\
\indent Keywords: group $C^*$-algebras, $L^{p+}$-representations, simple Lie groups}
\address{Tim de Laat
\newline Westf\"alische Wilhelms-Universit\"at M\"unster, Mathematisches Institut
\newline Einsteinstra\ss{}e 62, 48149 M\"unster, Germany}
\email{tim.delaat@uni-muenster.de}
\author{Timo Siebenand}
\address{Timo Siebenand
\newline Westf\"alische Wilhelms-Universit\"at M\"unster, Mathematisches Institut
\newline Einsteinstra\ss{}e 62, 48149 M\"unster, Germany}
\email{timo.siebenand@uni-muenster.de}
\title[Exotic group $C^{*}$-algebras of simple Lie groups]{Exotic group $C^{*}$-algebras of simple Lie groups with real rank one}
\begin{document}

\begin{abstract}
Exotic group $C^*$-algebras are $C^*$-algebras that lie between the universal and the reduced group $C^*$-algebra of a locally compact group. We consider simple Lie groups $G$ with real rank one and investigate their exotic group $C^{*}$-algebras $C^*_{L^{p+}}(G)$, which are defined through $L^p$-integrability properties of matrix coefficients of unitary representations. First, we show that the subset of equivalence classes of irreducible unitary $L^{p+}$-representations forms a closed ideal of the unitary dual of these groups. This result holds more generally for groups with the Kunze-Stein property. Second, for every classical simple Lie group $G$ with real rank one and every $2 \leq q < p \leq \infty$, we determine whether the canonical quotient map $C^*_{L^{p+}}(G) \twoheadrightarrow C^*_{L^{q+}}(G)$ has non-trivial kernel. Our results generalize, with different methods, recent results of Samei and Wiersma on exotic group $C^*$-algebras of $\mathrm{SO}_{0}(n,1)$ and $\mathrm{SU}(n,1)$. In particular, our approach also works for groups with property (T).
\end{abstract}

\maketitle

\section{Introduction and main results} \label{sec:introduction}
With every locally compact group $G$, we can associate two natural group $C^{*}$-algebras, namely the universal group $C^{*}$-algebra $C^{*}(G)$ and the reduced group $C^{*}$-algebra $C^{*}_{r}(G)$. The left-regular representation $\lambda \colon G \to \mathcal{B}(L^2(G))$ of $G$ extends to a quotient map $C^{*}(G) \twoheadrightarrow C^{*}_r(G)$ and this quotient map is a ${}^*$-isomorphism if and only if $G$ is amenable.

If $G$ is not amenable, there may be many group $C^{\ast}$-algebras lying\linebreak ``between'' the universal and the reduced group $C^{*}$-algebra. An exotic group $C^{*}$-algebra of a locally compact group $G$ is a $C^{*}$-completion $A$ of $C_c(G)$ such that the identity map from $C_c(G)$ to itself extends to non-injective quotient maps from $C^{*}(G)$ to $A$ and from $A$ to $C^{*}_r(G)$:
\[
	C^{*}(G) \twoheadrightarrow A \twoheadrightarrow C^{*}_r(G).
\]

In recent years, exotic group $C^{*}$-algebras and related constructions, such as exotic crossed products, have received an increased amount of attention, partly because of their relation with the Baum-Connes conjecture (see e.g.~\cite{MR3514939}, \cite{MR3824785}, \cite{MR3837592}).

The systematic study of exotic group $C^{*}$-algebras (of discrete groups) goes back to Brown and Guentner \cite{MR3138486}, who introduced and studied the notion of ideal completion. If $\Gamma$ is a countable discrete group and $D$ is an appropriate algebraic two-sided ideal of $\ell^{\infty}(\Gamma)$ (e.g.~$D=\ell^p(\Gamma)$, with $1 \leq p \leq \infty$), the ideal completion $C^{*}_D(\Gamma)$, which is defined as the completion of the group ring of $\Gamma$ with respect to the natural norm defined through all unitary representations with sufficiently many matrix coefficients lying in the ideal $D$, is a potentially exotic group $C^{*}$-algebra. Okayasu proved that for $2 \leq q < p \leq \infty$, the canonical quotient map $C^{*}_{\ell^p}(\mathbb{F}_d) \twoheadrightarrow C^{*}_{\ell^q}(\mathbb{F}_d)$ between ideal completions of the non-abelian free group $\mathbb{F}_d$ has non-trivial kernel \cite{MR3238088}. This result was independently obtained by Higson and by Ozawa, and it was extended to discrete groups containing a non-abelian free group as a subgroup by Wiersma \cite{MR3705441}. It is an open question whether every non-amenable (discrete) group admits exotic group $C^*$-algebras.

In the setting of (non-discrete) locally compact groups, the $L^p$-integrability properties (for different values of $p$) of matrix coefficients of unitary representations of the group also form an important source of potentially exotic group $C^*$-algebras. For $p \in [1, \infty]$, a unitary representation $\pi \colon G \to \mathcal{B}(\mathcal{H})$ is called an $L^{p}$-representation if suitable many of its matrix coefficients are elements of $L^p(G)$ (see Definition \ref{dfn:lprep} for the precise definition). The representation $\pi$ is called an $L^{p+}$-representation if $\pi$ is an $L^{p+\varepsilon}$-representation for every $\varepsilon > 0$.

The aim of this article is to investigate the exotic group $C^*$-algebras $C^*_{L^{p+}}(G)$ (see Section \ref{sec:clpplus} for the precise construction), which are constructed from the $L^{p+}$-representations of $G$, for connected simple Lie groups with real rank one and finite center. To this end, we first prove a structural result on the subset of the unitary dual $\widehat{G}$ consisting of (irreducible unitary) $L^{p+}$-representations of such groups, which more generally holds for groups with the Kunze-Stein property (see Section \ref{sec:ksgroups}). This result captures a key idea of Samei and \mbox{Wiersma} (cf.~\cite[Theorem 5.3]{sameiwiersma2}) in the setting of the Fell topology.

First, recall that a subset $S\subset \widehat{G}$ is called an ideal if for every representation $\pi \in S$ and every unitary representation $\rho$ of $G$, the unitary representation $\pi \otimes \rho$ is weakly contained in $S$.
\begin{thm} \label{thm:KSFell}
	Let $G$ be a Kunze-Stein group, and let $\widehat{G}_{L^{p+}}$ denote the subset of $\widehat{G}$ consisting of (equivalence classes of) $L^{p+}$-representations. Then $\widehat{G}_{L^{p+}}$ is a closed ideal of $\widehat{G}$.
\end{thm}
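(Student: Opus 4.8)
The plan is to reduce both assertions to a single analytic implication $(\star)$: if a unitary representation $\pi$ is weakly contained in an $L^{p+}$-representation $\sigma$, then $\pi$ is itself an $L^{p+}$-representation. Granting $(\star)$, everything else is soft. For closedness, recall that the Fell closure of $\widehat{G}_{L^{p+}}$ consists of the irreducible $\pi$ weakly contained in $\Sigma := \bigoplus_{\sigma \in \widehat{G}_{L^{p+}}} \sigma$; a direct sum of $L^{p+}$-representations is again $L^{p+}$ (on a vector supported on finitely many summands with good components, the diagonal coefficient is a finite sum of $L^{p+\varepsilon}$-functions), so $\Sigma$ is $L^{p+}$, and $(\star)$ forces $\pi \in \widehat{G}_{L^{p+}}$. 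For the ideal property, take $\pi \in \widehat{G}_{L^{p+}}$ and an arbitrary unitary representation $\rho$. On decomposable vectors $\xi \otimes \eta$ with $\xi$ in a dense subspace of vectors with $L^{p+\varepsilon}$-coefficients, the diagonal coefficient of $\pi \otimes \rho$ factors as $\langle \pi(\cdot)\xi,\xi\rangle \langle \rho(\cdot)\eta,\eta\rangle$, a product of an $L^{p+\varepsilon}$-function and a bounded one; as such vectors are dense, $\pi \otimes \rho$ is an $L^{p+}$-representation. Disintegrating $\pi \otimes \rho = \int^{\oplus} \tau_\omega \, d\mu(\omega)$ into irreducibles, almost every $\tau_\omega$ is weakly contained in $\pi \otimes \rho$, hence lies in $\widehat{G}_{L^{p+}}$ by $(\star)$; since $\pi \otimes \rho$ is weakly contained in the family $(\tau_\omega)$, it is weakly contained in $\widehat{G}_{L^{p+}}$, which is exactly the ideal condition.

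Proving $(\star)$ is the only point where the Kunze--Stein property is used, and I would first treat an even exponent $p = 2m$. Using H\"older's inequality on decomposable vectors $\xi_1 \otimes \cdots \otimes \xi_m$, a representation $\sigma$ is $L^{2m+}$ if and only if its $m$-fold tensor power $\sigma^{\otimes m}$ is $L^{2+}$, because the relevant coefficients of $\sigma^{\otimes m}$ are products of $m$ coefficients of $\sigma$. By the Kunze--Stein / Cowling--Haagerup--Howe characterization of temperedness, a representation is $L^{2+}$ precisely when it is weakly contained in the regular representation $\lambda$; thus ``$\sigma$ is $L^{2m+}$'' is equivalent to ``$\sigma^{\otimes m} \prec \lambda$''. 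Now if $\pi \prec \sigma$ with $\sigma$ an $L^{2m+}$-representation, then, using that weak containment is preserved under tensoring and is transitive, $\pi^{\otimes m} \prec \sigma^{\otimes m} \prec \lambda$; applying the equivalence in the reverse direction (its coefficient estimates hold for a dense set of vectors, e.g.\ tensor powers of smooth vectors) shows that $\pi$ is $L^{2m+}$, which is $(\star)$.

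The main obstacle is $(\star)$ itself, on two levels. Conceptually, weak containment is a local condition --- uniform approximation of matrix coefficients on compacta --- whereas the $L^{p+}$-property is a global integrability condition at infinity, and it is exactly the Kunze--Stein phenomenon (the almost-$L^2$ estimates underlying the equivalence ``$L^{2+} \Leftrightarrow$ tempered'') that bridges the two; verifying that this criterion is available in the generality of Section~\ref{sec:ksgroups} is the crux. Technically, the tensor-power reduction is clean only for even exponents: for general real $p \in [2,\infty)$ one cannot pass to an integer power, since a bounded $L^{p+\varepsilon}$-coefficient automatically lies in every higher $L^r$, an inclusion pointing the wrong way to recover the sharp exponent. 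I expect to treat general $p$ by interpolating the relevant representations or convolution norms between consecutive even integers (or by a direct Kunze--Stein estimate at the exact exponent), and I anticipate that making this interpolation reproduce $(\star)$ at the precise value $p$ will be the most delicate part of the argument.
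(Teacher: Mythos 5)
Your soft reductions are correct and essentially the paper's own: your implication $(\star)$ is precisely Proposition \ref{prp:weakcon_and_int}, closedness follows from it exactly as you describe, and the ideal property can be had even more cheaply (and without the second countability that your disintegration into irreducibles tacitly requires) by noting that an $L^{p+}$-representation such as $\pi\otimes\rho$ extends to $C^*_{L^{p+}}(G)$ by the very definition of the norm $\|\cdot\|_{L^{p+}}$, hence is weakly contained in the spectrum $\widehat{C^*_{L^{p+}}(G)}$, which equals $\widehat{G}_{L^{p+}}$ by Lemma \ref{lem:lp_closed_prop}. The genuine gap is in your proof of $(\star)$, and it is the one you flag yourself: the tensor-power mechanism is structurally limited to even exponents $p=2m$, while the theorem is asserted for all $p$. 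The deferred ``interpolation between consecutive even integers'' is not a routine completion, and there is no reason to believe it exists in this form: as your own remark about inclusions pointing the wrong way shows, knowing that some power $\pi^{\otimes m}$ is tempered can never certify an integrability exponent better than $2m$, so no manipulation of tensor powers of $\pi$ can detect a sharp non-even exponent. Since in the paper's application (real rank one groups, where spherical functions realize a continuum of decay rates) the non-even values of $p$ are exactly the point, this is the analytic heart of the theorem, not a technicality to be filled in later.

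There is also a secondary gap inside the even case. To pass from $\pi^{\otimes m}\prec\sigma^{\otimes m}\prec\lambda$ to ``$\pi$ is $L^{2m+}$'' you must know that coefficients of $\pi^{\otimes m}$ at the decomposable vectors $\xi^{\otimes m}$ lie in $L^{2+\varepsilon}(G)$, since these are the functions $(\pi_{\xi,\xi})^m$ you exponentiate back. A converse-CHH statement that only supplies \emph{some} dense subspace of vectors with $L^{2+\varepsilon}$-coefficients is not enough, because that subspace need not contain any decomposable vector, and your fallback ``tensor powers of smooth vectors'' has no meaning for a general locally compact Kunze-Stein group. What is needed is the strong form: for a Kunze-Stein group, every representation extending to $C^*_{L^{p+}}(G)$ has its entire weak*-closed coefficient space $B_\pi$ inside $\bigcap_{\varepsilon>0}L^{p+\varepsilon}(G)$. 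That statement, for \emph{every} $p\in[2,\infty)$ and not just $p=2$, is exactly Theorem \ref{thm:samei_wiersma_main} (Samei--Wiersma), and it is what the paper uses: granted it, $(\star)$ is the two-line kernel-inclusion argument of Proposition \ref{prp:weakcon_and_int} (if $\rho\prec\pi$ and $\pi$ is $L^{p+}$, then $\ker\pi_*\subset\ker\rho_*$, so $\rho$ extends to $C^*_{L^{p+}}(G)$, whence $B_\rho\subset L^{p+\varepsilon}(G)$ for all $\varepsilon>0$). The correct repair of your argument is therefore to invoke (or reprove) that theorem at the exact exponent $p$; reconstructing it from its $p=2$ case by tensoring, which is what your proposal amounts to, cannot reach non-even $p$.
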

This result was already known for the groups $\mathrm{SO}_0(n,1)$ and $\mathrm{SU}(n,1)$, with $n \geq 2$, from the work of Shalom \cite[Theorem 2.1]{MR1792293}. Related results were shown in \cite{MR0560837}, \cite{MR777342}, \cite{MR1355801}, \cite{MR1391214}, \cite{MR1682805}, \cite{MR1905394}.

Theorem \ref{thm:KSFell} leads to a natural strategy to find and distinguish exotic group $C^*$-algebras. The idea is to determine for which values of $p$, the ideals $\widehat{G}_{L^{p+}}$ are pairwise different. To this end, it suffices to show that there are representations with sufficiently many matrix coefficients which are $L^p$-integrable for certain $p$, but not $L^q$-integrable when $p > q$.

Our second result is an application of this approach in the setting of simple Lie groups with real rank one. Let $G$ be a connected simple Lie group with real rank one and finite center. Then $G$ is locally isomorphic to one of the following groups: $\mathrm{SO}_{0}(n,1)$, $\mathrm{SU}(n,1)$, $\mathrm{Sp}(n,1)$ (with $n \geq 2$) or $F_{4(-20)}$. The first three are called the (connected) classical simple Lie groups with real rank one, whereas $F_{4(-20)}$ is an exceptional Lie group.

Given a locally compact group $G$, we define $\Phi(G)$ as follows:
\begin{align*}
	\Phi(G) := \inf \{ p\in [1,\infty] \mid \forall \,\pi \in \widehat{G}\setminus \{ \tau_0 \},\, \pi \mbox{ is an } L^{p+}\mbox{-representation}\},
\end{align*}
where $\tau_0$ denotes the trivial representation of $G$. The constant $\Phi(G)$ is known for the three aforementioned classical Lie groups and is given by
\begin{align} \label{eq:intparameters}
	\Phi(G) = \begin{cases}
	\infty \qquad &\textrm{if }\; G = \mathrm{SO}_0(n,1),\\
	\infty \qquad &\textrm{if }\; G = \mathrm{SU}(n,1),\\
	2n+1 \qquad &\textrm{if }\; G = \mathrm{Sp}(n,1).
	\end{cases}
\end{align}
The cases $\mathrm{SO}_0(n,1)$ and $\mathrm{SU}(n,1)$ essentially follow from Harish-Chandra's rich work. It also follows directly from Proposition \ref{prp:int_of_spher_func}. The number $\Phi(\mathrm{Sp}(n,1))$ was computed in \cite{MR1355801}.

Our second result characterizes the exotic group $C^*$-algebras of the type $C^*_{L^{p+}}(G)$ for the classical simple Lie groups with real rank one.
\begin{thm} \label{thm:classicalrealrankone}
	Let $G$ be a (connected) classical simple Lie group with real rank one. Then for $2 \leq q < p \leq \Phi(G)$ (where $\Phi(G)$ is as in \eqref{eq:intparameters}), the canonical quotient map
	\[
		C^*_{L^{p+}}(G) \twoheadrightarrow C^*_{L^{q+}}(G)
	\]
	has non-trivial kernel.	Furthermore, for every $p,q\in [\Phi(G),\infty)$, we have 
	\begin{align*}
		C^*_{L^{p+}}(G) =  C^*_{L^{q+}}(G).
	\end{align*}
	Furthermore, if $H$ is a connected simple Lie group with finite center that is locally isomorphic to a classical simple Lie group $G$ with real rank one, then the same result holds for $H$ with $\Phi(H)=\Phi(G)$.
\end{thm}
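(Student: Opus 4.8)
The plan is first to translate the comparison of the two $C^{*}$-completions into a comparison of the closed ideals $\widehat{G}_{L^{p+}}$ and $\widehat{G}_{L^{q+}}$ furnished by Theorem \ref{thm:KSFell}, and then to detect the difference of these ideals by means of the $L^{r}$-integrability of spherical functions. For the first step I would use the standard dictionary between $C^{*}$-completions of $C_{c}(G)$ and subsets of $\widehat{G}$ that are closed under weak containment: the completion $C^{*}_{L^{p+}}(G)$ is exactly the one through which a representation factors if and only if it is weakly contained in $\widehat{G}_{L^{p+}}$. Consequently, since Theorem \ref{thm:KSFell} guarantees that $\widehat{G}_{L^{p+}}$ is already a closed ideal, the canonical quotient map $C^{*}_{L^{p+}}(G) \twoheadrightarrow C^{*}_{L^{q+}}(G)$ is injective precisely when $\widehat{G}_{L^{p+}} = \widehat{G}_{L^{q+}}$, and it has non-trivial kernel precisely when $\widehat{G}_{L^{q+}} \subsetneq \widehat{G}_{L^{p+}}$.

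The equality statement for $p,q \in [\Phi(G),\infty)$ is then the easy direction. By the definition of $\Phi(G)$, every $\pi \in \widehat{G}\setminus\{\tau_{0}\}$ is an $L^{r+}$-representation as soon as $r \geq \Phi(G)$, while the trivial representation $\tau_{0}$, whose only matrix coefficient is the constant function $1$, is not an $L^{r+}$-representation for any finite $r$ because $G$ is non-compact. Hence $\widehat{G}_{L^{p+}} = \widehat{G}\setminus\{\tau_{0}\} = \widehat{G}_{L^{q+}}$ for all such $p,q$, and the two $C^{*}$-algebras coincide.

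For the strict-inclusion statement I would realize every integrability threshold in the interval $[2,\Phi(G)]$ by an explicit spherical representation. Concretely, I would run through the unitary spherical principal series (which is tempered, hence $L^{2+}$) together with the complementary series $\pi_{s}$, whose diagonal matrix coefficient associated to a $K$-fixed vector is the elementary spherical function $\varphi_{s}$. Proposition \ref{prp:int_of_spher_func} then provides the precise $L^{r}$-integrability of $\varphi_{s}$ in terms of $s$, and the crucial point is that the resulting threshold $r(s)$ depends continuously on $s$ and sweeps out the whole interval $[2,\Phi(G)]$ as $s$ ranges over the complementary-series parameters, the upper endpoint $\Phi(G)$ being attained at the boundary representation (isolated by property (T) when $G=\mathrm{Sp}(n,1)$). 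Granting this, for any $2 \leq q < p \leq \Phi(G)$ I would pick a complementary series representation $\pi_{s}$ with threshold $r(s) \in (q,p]$; then $\varphi_{s} \in L^{r(s)+\varepsilon}(G)$ for every $\varepsilon>0$ but $\varphi_{s}\notin L^{q+\varepsilon}(G)$ for $\varepsilon$ small, so that $\pi_{s}\in\widehat{G}_{L^{p+}}\setminus\widehat{G}_{L^{q+}}$, which exhibits the strict inclusion and hence the non-trivial kernel.

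Finally, for a group $H$ locally isomorphic to such a $G$ and with finite center, the Kunze-Stein property, the parametrization of the spherical unitary dual, and the asymptotics of the spherical functions depend only on the Lie algebra together with the (finite) center; thus Theorem \ref{thm:KSFell} applies to $H$, the value $\Phi(H)=\Phi(G)$ is unchanged, and the argument transfers verbatim. I expect the main obstacle to be the analytic core of the third paragraph: establishing that the threshold $r(s)$ varies continuously and genuinely attains every value of $[2,\Phi(G)]$, in particular the endpoint $\Phi(G)$ in the property (T) case $\mathrm{Sp}(n,1)$. This requires sharp Harish-Chandra-type asymptotics for $\varphi_{s}$, the explicit description of the complementary-series range, and the exact location of the spectral gap, which is precisely what Proposition \ref{prp:int_of_spher_func} and the values in \eqref{eq:intparameters} are meant to deliver.
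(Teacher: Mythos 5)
Your overall route is the same as the paper's: identify the spectrum of $C^*_{L^{p+}}(G)$ with the closed ideal $\widehat{G}_{L^{p+}}$ (Lemma \ref{lem:lp_closed_prop} together with Theorem \ref{thm:KSFell}), separate these ideals for different $p$ by means of complementary series spherical functions via Kostant's parametrization (Lemma \ref{lem:kostant}) and the integrability criterion of Proposition \ref{prp:int_of_spher_func}, and deduce the equality statement from the definition of $\Phi(G)$. One inaccuracy in your third paragraph should be removed rather than proved: the threshold $r(\lambda)=2\rho/(\rho-\lambda)$ sweeps out only the \emph{half-open} interval $[2,\Phi(G))$ as $\lambda$ runs through $[0,\rho_0(G))$; there is no class one ``boundary representation'' attaining the endpoint (for $\mathrm{Sp}(n,1)$ the complementary series stops at $\rho_0=2n-1<\rho=2n+1$, and by the bijectivity in Lemma \ref{lem:kostant} no spherical representation has threshold exactly $2n+1$). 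This is harmless for your argument, because for $2\le q<p\le\Phi(G)$ you only need a threshold in $(q,p]$, and $(q,p]\cap[2,\Phi(G))\neq\emptyset$ since $q<\Phi(G)$; but the attainment claim, which you flag as ``the crucial point'', is false.

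The genuine gap is your final paragraph. The invariants you list (Kunze--Stein property, spherical unitary dual, spherical asymptotics) are all \emph{spherical} data. They do transfer the first assertion to $H$ and give the lower bound $\Phi(H)\ge\Phi(G)$, but they cannot give $\Phi(H)\le\Phi(G)$, hence neither the claim $\Phi(H)=\Phi(G)$ nor the second assertion for $H$: the constant $\Phi$ is defined through \emph{all} irreducible unitary representations, and non-spherical representations of $H$ are not controlled by local isomorphism of spherical data alone. The only case where this matters is $G=\mathrm{Sp}(n,1)$ (for $\mathrm{SO}_0(n,1)$ and $\mathrm{SU}(n,1)$ one has $\Phi=\infty$, so the second assertion is vacuous), and the paper closes it with Lemma \ref{prp:bij_class_one_local_iso_grps} and Proposition \ref{prp:int_const_spherical_vs_global}: since $\mathrm{Sp}(n,1)$ is simply connected, any $H$ locally isomorphic to it with finite center is a quotient of $\mathrm{Sp}(n,1)$ by a finite central subgroup, so every irreducible representation of $H$ pulls back to one of $\mathrm{Sp}(n,1)$ with the same integrability, whence $\Phi(H)\le\Phi(\mathrm{Sp}(n,1))=2n+1$, the value $2n+1$ being the external input of Li recorded in \eqref{eq:intparameters}; the reverse inequality comes from the spherical thresholds. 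Some argument of this kind must replace the assertion that everything ``depends only on the Lie algebra together with the finite center.''
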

Additional to Theorem \ref{thm:classicalrealrankone}, we obtain partial results for the exceptional Lie group $F_{4(-20)}$ (see Theorem \ref{thm:exoticF4}).

Exotic group $C^*$-algebras of Lie groups were considered before in \cite{MR3418075}, in which Wiersma proved that for $2 \leq q < p \leq \infty$, the quotient map $C^*_{L^{p+}}(\mathrm{SL}(2,\mathbb{R})) \twoheadrightarrow C^*_{L^{q+}}(\mathrm{SL}(2,\mathbb{R}))$ has non-trivial
kernel, by studying the representation theory of $\mathrm{SL}(2,\mathbb{R})$. Note that $\mathrm{SL}(2,\mathbb{R})$ is locally isomorphic to $\mathrm{SO}_0(2,1)$, and hence included in Theorem \ref{thm:classicalrealrankone}. More recently, Samei and Wiersma deduced the existence of continua of exotic group $C^*$-algebras for certain groups having the ``integrable Haagerup property'' and the rapid decay property or the Kunze-Stein property. They obtained the cases $G=\mathrm{SO}_0(n,1)$ and $\mathrm{SU}(n,1)$ of Theorem \ref{thm:classicalrealrankone} above. Their method, which is of geometric nature, is inherently unable to deal with groups with Kazhdan's property (T) (see \cite{MR2415834} for background on property (T)), examples of which are $\mathrm{Sp}(n,1)$ and $F_{4(-20)}$. In the methods used in this article, the (integrable) Haagerup property does not play a role, and our method works equally well for the groups $\mathrm{Sp}(n,1)$ and $F_{4(-20)}$.

The article is organized as follows. After recalling some preliminaries in Section \ref{sec:preliminaries}, we recall the algebras $C^*_{L^{p+}}(G)$ and prove some new results about them in Section \ref{sec:clpplus}. Section \ref{sec:ksgroups} is concerned with the unitary dual and the algebras $C^*_{L^{p+}}(G)$ of Kunze-Stein groups $G$. In particular, we prove Theorem \ref{thm:KSFell} in that section. In Section \ref{sec:realrankone}, we prove Theorem \ref{thm:classicalrealrankone}.

\section*{Acknowledgements}
We thank Siegfried Echterhoff, Ebrahim Samei and Matthew Wiersma for interesting discussions and useful comments.

The authors are supported by the Deutsche Forschungsgemeinschaft - Project-ID 427320536 - SFB 1442, as well as under Germany’s Excellence Strategy - EXC 2044 - 390685587, Mathematics Münster: Dynamics - Geometry - Structure.

\section{Preliminaries} \label{sec:preliminaries}

\subsection{Matrix coefficients and weak containment}
Let $G$ be a locally compact group, and let $\pi \colon G \to \mathcal{B}(\mathcal{H})$ be a unitary representation. Recall that a matrix coefficient of $\pi$ is a function from $G$ to $\mathbb{C}$ of the form $\pi_{\xi,\eta} \colon s \mapsto \langle \pi(s)\xi,\eta \rangle$, where $\xi,\eta \in \mathcal{H}$. Such functions are continuous bounded functions on $G$. Matrix coefficients of the form $\pi_{\xi,\xi}$ (i.e.~with $\xi=\eta$) are called diagonal matrix coefficients.

A unitary representation $\pi_1$ of $G$ is said to be weakly contained in a unitary representation $\pi_2$ of $G$ if every diagonal matrix coefficient of $\pi_1$ can be approximated by finite sums of diagonal matrix coefficients of $\pi_2$ uniformly on compact subsets of $G$. For details, we refer to \cite{MR0458185}.

\subsection{Unitary dual and Fell topology} \label{subsec:unitarydualfelltopology}
Let $G$ be a locally compact group, and let $\widehat{G}$ denote its unitary dual, i.e.~the set of equivalence classes of irreducible unitary representations equipped with the Fell topology. If $S$ is a subset of $\widehat{G}$, then the closure $\overline{S}$ of $S$ in the Fell topology consists of all elements of $\widehat{G}$ which are weakly contained in $S$. Let $\widehat{G}_r$ denote the subspace of $\widehat{G}$ consisting of all elements of $\widehat{G}$ which are weakly contained in the left regular representation $\lambda \colon G \to \mathcal{B}(L^2(G))$. For details, we refer to \cite{MR0458185}.

\subsection{Constructions of exotic group $C^{*}$-algebras} \label{subsec:constructions}
Let us recall two well-known constructions of exotic group $C^{*}$-algebras, which go back to \cite{MR3514939} and \cite{MR3141810}.

Let $G$ be a locally compact group, and let $\mu_G$ be a Haar measure on $G$. A group $C^*$-algebra associated with $G$  is a $C^{*}$-completion $A$ of $C_c(G)$ with respect to a $C^*$-norm $\|.\|_{\mu}$ which satisfies $\|f\|_u \geq \|f\|_{\mu} \geq \|f\|_r$ for all $f \in C_c(G)$, where $\|f\|_u$ and $\|f\|_r$ denote the universal and the reduced $C^*$-norm, respectively. In this case, the identity map $C_c(G) \to C_c(G)$ induces canonical surjective ${}^*$-homomorphisms $C^{*}(G) \twoheadrightarrow A$ and $A \twoheadrightarrow C^{*}_r(G)$. If both these quotient maps have non-trivial kernel, then $A$ is called an exotic group $C^*$-algebra. This is equivalent to the definition in Section \ref{sec:introduction}.

Let $G$ be a locally compact group, and let $\widehat{G}$ and $\widehat{G}_r$ be as before. A subset $S \subset \widehat{G}$ is called admissible if $\widehat{G}_r \subset \overline{S}$. If $S$ is admissible, then
\[
	\|f\|_S:=\sup\{\|\pi(f)\| \mid \pi \in S\}
\]
defines a $C^{*}$-norm on $C_c(G)$. The corresponding completion $C^*_S(G)$ is a potentially exotic group $C^{*}$-algebra. Furthermore, if $S$ is admissible, we have $\widehat{C^*_S(G)} =\overline{S}$, where $\widehat{C^*_S(G)}$ is the spectrum of the group $C^*$-algebra $C^*_S(G)$. More precisely, there is a bijective map from $S$ to $\widehat{C^*_S(G)}$ mapping a representation $\pi\colon G\to \mathcal{B}(\mathcal{H})$ to the corresponding irreducible ${}^*$-representation 
$\pi_{*}\colon C^*_S(G)\to \mathcal{B}(\mathcal{H})$ given by
$\pi_{*}(f) = \int f(s)\pi(s)\, \mathrm{d}\mu_G(s)$ for $f\in C_c(G)$.

A subset $S\subset \widehat{G}$ is said to be an ideal if for every representation $\pi \in S$ and every unitary representation $\rho$ of $G$, the unitary representation $\pi \otimes \rho$ is weakly contained in $S$. Note that the Fell absorption principle implies that every non-empty ideal $S \subset \widehat{G}$ is admissible. For more details on the above construction, we refer to \cite{MR3514939}.

Let us now discuss another approach, from \cite{MR3141810}. Recall that the Fourier-Stieltjes algebra $B(G)$, consisting of all matrix coefficients of unitary representations of $G$, is a subalgebra of the algebra of continuous bounded functions on $G$. It can be canonically
identified with the dual space $C^*(G)^*$ of the universal group $C^*$-algebra $C^*(G)$ through the pairing induced by
\begin{align*}
	\langle \varphi, \, f \rangle = \int \varphi f \mathrm{d}\mu_G
\end{align*}
for $\varphi \in B(G)$ and $f\in C_c(G) \subset C^*(G)$. Furthermore, $B(G)$ admits, in a canonical way, a left and a right $G$-action (see \cite[Section 3]{MR3141810}). Let $B_r(G) \subset B(G)$ denote the dual space of the reduced group $C^*$-algebra $C^*_r(G)$. It was shown in \cite[Lemma 3.1]{MR3141810} that if $E \subset B(G)$ is a weak*-closed, $G$-invariant subspace of $B(G)$ containing $B_r(G)$, then
\begin{align*}
	C_E^*(G) = C^*(G)/{}^{\perp}E
\end{align*}
is a group $C^*$-algebra, where ${}^\perp E = \{ x\in C^*(G)\mid \langle\varphi, x\rangle = 0 \; \forall \varphi \in E\}$ is the pre-annihilator of $E$.

Let us explain the connection between the two approaches mentioned above. This connection is essentially contained in \cite{MR3141810}, but we give a proof which is in line with our framework and conventions.

\begin{prp}\label{prp:ideal_in_spec_coaction}
	A non-empty closed set $S \subset \widehat{G}$ is an ideal if and only if the canonical comultiplication
	$\Delta \colon C^*(G) \to \mathcal{M}(C^*(G) \otimes C^*(G))$ factors through a coaction
	$\Delta_S \colon C^*_S(G) \to \mathcal{M}(C^*_S(G)\otimes C^*(G) )$. 
	Here $C^*_S(G)\otimes C^*(G)$ denotes the minimal tensor product
	of the C*-algebra $C^*_S(G)$ and $C^*(G)$.
\end{prp}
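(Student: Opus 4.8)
The plan is to prove both implications by relating the coaction property to the ideal property via the theory of weak containment and the spectrum of $C^*_S(G)$. The key structural fact I would exploit is that, since $S$ is a non-empty closed ideal, it is admissible by the Fell absorption principle, so $C^*_S(G)$ is a genuine group $C^*$-algebra with $\widehat{C^*_S(G)} = \overline{S} = S$, and the quotient map $q_S \colon C^*(G) \twoheadrightarrow C^*_S(G)$ is exactly the one whose kernel is $\bigcap_{\pi \in S} \ker \pi_*$. Factoring the comultiplication through $C^*_S(G)$ in the first variable is then equivalent to the inclusion of kernels
\[
	\ker q_S \subset \ker\bigl( (q_S \otimes \mathrm{id}_{C^*(G)}) \circ \Delta \bigr),
\]
since $\Delta_S$ exists precisely when the map $(q_S \otimes \mathrm{id}) \circ \Delta \colon C^*(G) \to \mathcal{M}(C^*_S(G) \otimes C^*(G))$ annihilates $\ker q_S$.

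First I would reformulate this kernel inclusion representation-theoretically. A nondegenerate representation of $\mathcal{M}(C^*_S(G) \otimes C^*(G))$ pulled back along $(q_S \otimes \mathrm{id}) \circ \Delta$ corresponds, on the level of $G$, to a representation of the form $\pi \otimes \rho$ where $\pi$ is (weakly contained in) $S$ and $\rho$ is an arbitrary unitary representation of $G$; this is exactly the content of how $\Delta$ encodes tensor products and how the spectrum of $C^*_S(G)$ picks out $\overline{S}$. Therefore the map $(q_S \otimes \mathrm{id}) \circ \Delta$ factors through $q_S$ if and only if every such $\pi \otimes \rho$ is weakly contained in $S$ — which is precisely the defining condition for $S$ to be an ideal. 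I would make this precise by noting that $\|f\|$ computed in $(q_S \otimes \mathrm{id}) \circ \Delta$ equals $\sup \{ \|(\pi \otimes \rho)(f)\| \}$ over the relevant $\pi$ and $\rho$, and comparing it with $\|f\|_S$; equality of these norms on $C_c(G)$ is equivalent to the ideal condition.

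For the forward direction (ideal $\Rightarrow$ coaction), I would verify that the factored map $\Delta_S$ is genuinely a coaction, i.e.\ that it satisfies the coaction identity $(\Delta_S \otimes \mathrm{id}) \circ \Delta_S = (\mathrm{id} \otimes \Delta) \circ \Delta_S$ and the appropriate nondegeneracy condition. These follow formally because $\Delta$ itself is a coaction (the comultiplication on $C^*(G)$) and $\Delta_S$ is induced from it by the quotient $q_S$; one checks that the coaction identity descends to the quotient because the relevant diagrams commute on the dense subalgebra $C_c(G)$, where everything is given by the diagonal embedding $s \mapsto s \otimes s$. For the converse (coaction $\Rightarrow$ ideal), I would run the norm comparison backwards: the existence of $\Delta_S$ forces $\|f\|_S \geq \|(\pi \otimes \rho)(f)\|$ for all $\pi \in S$ and all $\rho$, which says exactly that $\pi \otimes \rho$ is weakly contained in $S$.

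The main obstacle I anticipate is the careful handling of the multiplier algebras and the nondegeneracy/well-definedness of $\Delta_S$ as a map into $\mathcal{M}(C^*_S(G) \otimes C^*(G))$ rather than into the minimal tensor product itself; one must check that $(q_S \otimes \mathrm{id}) \circ \Delta$ has range in the multiplier algebra and is nondegenerate in the coaction sense. The cleanest route is to work at the level of $C_c(G)$, where $\Delta(f)$ is controlled explicitly, establish the norm identity there, and then extend by continuity and density — invoking \cite{MR3141810} for the precise compatibility between the coaction picture and the weak*-closed $G$-invariant subspace picture, so that I do not have to re-derive the multiplier-algebra formalism from scratch.
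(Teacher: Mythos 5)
Your proposal is correct and follows essentially the same route as the paper: both arguments reduce the existence of $\Delta_S$ to the inequality $\|(q_S \otimes \mathrm{id})\Delta(f)\| \leq \|f\|_S$ on $C_c(G)$, compute the left-hand norm via tensor-product representations $\pi \otimes \rho$ with $\pi \in S$ and $\rho$ arbitrary (the paper does this with the explicit faithful representation $(\bigoplus_{\rho \in S}\rho)_* \otimes \sigma_*$ extended to the multiplier algebra, which is also the cleanest way to make your norm identity precise, since not every representation of the minimal tensor product is itself of product form), and identify the resulting condition with weak containment of all such $\pi \otimes \rho$ in $S$, i.e.\ the ideal property. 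Your converse via ``running the norm comparison backwards'' is likewise the paper's argument in different clothing: composing $\Delta_S$ with $\pi_* \otimes \rho_*$ and using contractivity of ${}^*$-homomorphisms together with $\widehat{C^*_S(G)} = S$ is exactly the statement that $\|(\pi \otimes \rho)(f)\| \leq \|f\|_S$ for all $f \in C_c(G)$.
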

\begin{proof}
First, suppose that $S$ is an ideal, and let $q\colon C^*(G)\to C^*_S(G)$ be the canonical quotient map and $\sigma \colon G \to \mathcal{B}(\mathcal{H})$ a unitary representation of $G$ such that the integrated form $\sigma_* \colon C^*(G)\to \mathcal{B}(\mathcal{H})$ is a faithful (non-degenerate) *-representation of $C^*(G)$. The *-representation $\pi_* = (\bigoplus_{\rho \in S} \rho)_* \otimes	\sigma_*$ is a faithful (non-degenerate) *-representation of $C^*_{S}(G)\otimes C^*(G)$. Therefore, it extends to a faithful *-representation of the multiplier algebra of the algebra $C^*_{S}(G)\otimes C^*(G)$, which we denote with $\pi_*$ again. Note that the \linebreak *-representation 
	${\pi_* \circ (q \otimes \mathrm{id}) \circ \Delta}$ of $C^*(G)$ is the integrated form of \linebreak $\bigoplus_{\rho \in S}\rho \otimes \sigma$. Since $S$ is assumed to be an ideal, this implies that $\pi_* \circ (q \otimes \mathrm{id}) \circ \Delta$ factors through the canonical quotient map $q\colon C^*(G)\to C^*_S(G)$. Finally, since $\pi_*$ is faithful,
	we obtain that $(q \otimes \mathrm{id}) \circ  \Delta$ factors through $q$, which proves the first direction.
	
	Now, suppose that $C^*_S(G)$ is a group $C^*$-algebra with $\widehat{C_S^*(G)} = S$
	such that the comultiplication
	$\Delta \colon C^*(G) \to \mathcal{M}(C^*(G) \otimes C^*(G))$ factors through a coaction
	$\Delta_S \colon C^*_S(G) \to \mathcal{M}(C^*_S(G)\otimes C^*(G) )$.  
	Let $\rho \in S$, and let $\pi$ be any unitary representation of $G$. Again, we denote the integrated form of $\rho$ and $\pi$ by $\rho_*$ and $\pi_*$, respectively. Then
	$\pi_*\otimes \rho_*$ is a non-degenerate\linebreak
	*-representation of $C^*(G)\otimes C^*_S(G)$. Hence, it extends to a non-degenerate\linebreak *-representation of the multiplier algebra $\mathcal{M}(C^*(G)\otimes
	C^*_S(G))$, which we denote with $\pi_*\otimes \rho_*$ again.  The *-representation
	$(\pi_*\otimes \rho_*) \circ \Delta_S$ is the integrated form of the unitary representation $\pi\otimes \rho$ of $G$.
	This shows that $\pi\otimes \rho$ is weakly contained in $S$. 
\end{proof}
Combining this proposition with \cite[Corollary 3.13]{MR3141810}, we obtain the following result, which is probably well known to experts, but to our knowledge not explicitly contained in the literature.
\begin{prp} \label{prp:idealfs}
	Let $G$ be a locally compact group and $C^*_\mu(G)$ a group $C^*$-algebra of $G$. The following are equivalent:
	\begin{enumerate}[(i)]
		\item The set $\widehat{C^*_\mu(G)}\subset \widehat{G}$ is a closed ideal in $\widehat{G}$.
		\item The dual space $C^*_\mu(G)^*$ of $C^*_\mu(G)$ is a $G$-invariant ideal in $B(G)$.
	\end{enumerate}
\end{prp}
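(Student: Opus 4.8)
The plan is to use the $G$-coaction as a bridge, treating Proposition \ref{prp:ideal_in_spec_coaction} and \cite[Corollary 3.13]{MR3141810} as the two halves of the equivalence and letting duality do the rest. First, I would recast the single datum $C^*_\mu(G)$ in both languages simultaneously. On the spectrum side, set $S := \widehat{C^*_\mu(G)} \subset \widehat{G}$; since the C*-norm of $C^*_\mu(G)$ is recovered as the supremum of $\|\pi_*(\cdot)\|$ over its irreducible $*$-representations, pulling back to $C_c(G)$ yields $C^*_\mu(G) = C^*_S(G)$, with $S$ a closed subset. As $C^*_\mu(G)$ is nonzero, $S$ is non-empty, and since $\|\cdot\|_\mu \geq \|\cdot\|_r$ forces $\widehat{G}_r \subset \overline{S} = S$, the set $S$ is admissible. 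On the Fourier--Stieltjes side, set $E := C^*_\mu(G)^* \subset B(G)$; as the dual space of a quotient of $C^*(G)$, it is automatically weak*-closed and $G$-invariant, and it contains $B_r(G)$. Thus the only substantive content of (i) is that $S$ is an ideal of $\widehat{G}$, and the only substantive content of (ii) is that $E$ is an (algebraic two-sided) ideal of $B(G)$.

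Next, Proposition \ref{prp:ideal_in_spec_coaction} identifies condition (i) with the existence of a coaction $\Delta_S \colon C^*_S(G) \to \mathcal{M}(C^*_S(G)\otimes C^*(G))$ through which the comultiplication $\Delta$ factors. It then remains to match this coaction condition with (ii). This is exactly what \cite[Corollary 3.13]{MR3141810} supplies: for a weak*-closed, $G$-invariant subspace $E \subset B(G)$ containing $B_r(G)$, it characterizes the existence of such a coaction on $C^*_E(G) = C^*(G)/{}^\perp E$ by the condition that $E$ be an ideal of $B(G)$. Since $E = C^*_\mu(G)^* = C^*_S(G)^*$ and $C^*_\mu(G) = C^*_S(G)$, the C*-algebra appearing in Proposition \ref{prp:ideal_in_spec_coaction} and the C*-algebra appearing in \cite[Corollary 3.13]{MR3141810} are the same; chaining the two equivalences through the coaction then gives (i) $\Longleftrightarrow$ (ii).

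The step I expect to require the most care is precisely this bookkeeping: I must check that the coaction produced by Proposition \ref{prp:ideal_in_spec_coaction} from the representation-theoretic side is literally the one that \cite[Corollary 3.13]{MR3141810} associates to the subspace $E$, i.e. that under the canonical identifications $C^*_\mu(G) = C^*_S(G)$ and $C^*_\mu(G)^* = E$ the two factorizations of $\Delta$ agree (including the conventions on left versus right $G$-actions on $B(G)$ and on which tensor factor carries $C^*(G)$). Once this convention-matching is settled, no further analytic input is needed: the duality between the spectrum picture and the Fourier--Stieltjes picture does all the work, and the proposition follows formally.
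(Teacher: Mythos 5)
Your proposal is correct and is essentially the paper's own argument: the paper proves this proposition precisely by combining Proposition \ref{prp:ideal_in_spec_coaction} with \cite[Corollary 3.13]{MR3141810}, using the coaction condition as the common intermediary, exactly as you do (the paper leaves the identifications $C^*_\mu(G)=C^*_S(G)$, $E=C^*_\mu(G)^*$ and the automatic closedness, admissibility and $G$-invariance implicit). The convention-matching step you flag as delicate is in fact immediate, since a factorization of $(q\otimes\mathrm{id})\circ\Delta$ through the surjection $q\colon C^*(G)\twoheadrightarrow C^*_\mu(G)$ is unique whenever it exists, so both results necessarily speak of the same coaction.
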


\subsection{Covering groups of Lie groups} \label{subsec:coveringgroups}
A covering group of a connected Lie group $G$ is a Lie group $\widetilde{G}$ with a surjective Lie group homomorphism $\sigma \colon \widetilde{G} \rightarrow G$ in such a way that $(\widetilde{G},\sigma)$ is a (topological) covering space of $G$.

A universal covering space is a covering space which is simply connected. Every connected Lie group $G$ admits a universal covering space $\widetilde{G}$, which admits a canonical Lie group structure. Such a universal covering group $\widetilde{G}$ of $G$ is uniquely determined up to isomorphism. Universal covering groups satisfy the exact sequence $1 \rightarrow \pi_1(G) \rightarrow \widetilde{G} \rightarrow G \rightarrow 1$, where $\pi_1(G)$ denotes the fundamental group of $G$. For details, we refer to \cite[Section I.11]{MR1920389}.

\subsection{Class one representations and spherical functions}\label{subsec:class_one_spherical_func}
We briefly recall Gelfand pairs, spherical functions and class one representations, which we will need for the proof of Theorem \ref{thm:classicalrealrankone}.

Let $G$ be a locally compact group, let $\mu_G$ be a Haar measure, and let $K < G$ be a compact subgroup of $G$. A function $\varphi \colon G \to \mathbb{C}$ is called $K$-bi-invariant if $\varphi(k_1sk_2)=\varphi(s)$ for all $s \in G$ and $k_1,k_2 \in K$. Let $C_c(K \backslash G / K)$ denote the $*$-subalgebra of the convolution algebra $C_c(G)$ consisting of all $K$-bi-invariant functions on $G$ with compact support. The pair $(G,K)$ is called a Gelfand pair if the algebra $C_c(K \backslash G / K)$ is commutative.

Let $(G,K)$ be a Gelfand pair. A continuous $K$-bi-invariant function $\varphi \colon G \to \mathbb{C}$ is called a spherical function if
\[
	\chi \colon f \mapsto \int f(s)\varphi(s^{-1})d\mu_G(s)
\]
defines a non-trivial character of $C_c(K \backslash G / K)$. Here, $\mu_G$ denotes a Haar measure on $G$.

A pair $(G,K)$ consisting of a locally compact group $G$ with a compact subgroup $K$ is a Gelfand pair if and only if for every irreducible unitary representation $\pi \colon G \to \mathcal{B}(\mathcal{H})$, the subspace $\mathcal{H}^{K}$ of $\mathcal{H}$ consisting of $\pi(K)$-invariant vectors is at most one-dimensional. An irreducible unitary representation $\pi \colon G \to \mathcal{B}(\mathcal{H})$ for which $\dim(\mathcal{H}^K)=1$ is called a class one representation. For a Gelfand pair $(G,K)$, we write $(\widehat{G}_K)_1$ for the (equivalence classes of) class one representations in $\widehat{G}$. The space $(\widehat{G}_K)_1$ is also called the spherical unitary dual.

Let $\pi \colon G \to \mathcal{B}(\mathcal{H})$ be a class one representation of the Gelfand pair $(G,K)$, and let $\xi \in \mathcal{H}^K$ be a $\pi(K)$-invariant vector of norm one. Then the diagonal matrix coefficient $\pi_{\xi,\xi}$ is a positive definite spherical function. This assignment defines a bijection between $(\widehat{G}_K)_1$ and the set of all positive definite spherical functions of the Gelfand pair $(G,K)$.

For an introduction to the theory of Gelfand pairs, spherical functions and class one representations, we refer to \cite{MR2640609}.

\section{The algebras $C^*_{L^{p+}}(G)$} \label{sec:clpplus}
We now recall and discuss the algebras $C^*_{L^{p+}}(G)$, which are the (potentially) exotic group $C^*$-algebras of our interest. To this end, we first recall $L^p$-representations and $L^{p+}$-representations.

\begin{dfn} \label{dfn:lprep}
	Let $G$ be a locally compact group, let $\pi \colon G \to \mathcal{B}(\mathcal{H})$ be a unitary representation, and let $ p \in [1, \infty]$.
\begin{enumerate}[(i)]
	\item We say that $\pi$ is an $L^{p}$-representation if there is a dense subspace 
		$\mathcal{H}_0 \subset \mathcal{H}$
		such that $\pi_{\xi,\eta}\in L^p(G)$ for all $\xi,\eta \in \mathcal{H}_0$.
	\item We say that $\pi$ is an $L^{p+}$-representation if $\pi$ is an $L^{p+\varepsilon}$-representation for all $\varepsilon > 0$.
\end{enumerate}
\end{dfn}
Recall that whenever a function $f \in C_b(G)$ is contained in $L^p(G)$ for some $p \in [1,\infty]$, then $f$ is contained in $L^q(G)$ for all $q \geq p$. In particular, this is true for matrix coefficients of unitary representations.

\begin{rem}
Suppose that $\pi\colon G \to \mathcal{B}(\mathcal{H})$ is a unitary representation and that $p \in [1,\infty]$. It follows by the polarization identity that $\pi$ is an $L^p$-representation if and only if there is a dense subspace $\mathcal{H}_0\subset \mathcal{H}$ such that $\pi_{\xi,\xi}\in L^p(G)$ for all $\xi \in \mathcal{H}_0$. This definition of $L^p$-representation is used in \cite{sameiwiersma2}.

Also, note that in the literature different terminology, e.g.~strongly $L^p$- and strongly $L^{p+}$-representation, is used for what we call $L^p$-representation and $L^{p+}$-representation.
\end{rem}
The following result gives a sufficient condition for a cyclic representation to be an $L^p$-representation.
\begin{prp}\label{prp:cyclic_lp_reps}
Let $\pi \colon G \to \mathcal{B}(\mathcal{H})$ be a cyclic unitary representation with cyclic vector $\xi \in \mathcal{H}$ such that $\pi_{\xi,\xi}\in L^p(G)$ for some $p\in [1,\infty]$. Then $\pi$ is an $L^p$-representation.
\end{prp}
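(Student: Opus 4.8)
The plan is to produce the required dense subspace explicitly by taking the linear span of the orbit of the cyclic vector, namely
\[
	\mathcal{H}_0 := \operatorname{span}\{\pi(s)\xi \mid s \in G\}.
\]
This is a linear subspace by construction, and it is dense in $\mathcal{H}$ precisely because $\xi$ is a cyclic vector. It therefore suffices to show that $\pi_{\eta,\zeta}\in L^p(G)$ for all $\eta,\zeta \in \mathcal{H}_0$, and by bilinearity of $(\eta,\zeta)\mapsto \pi_{\eta,\zeta}$ it is enough to verify this on the spanning vectors $\pi(s)\xi$.

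First I would treat the generators. For $\eta = \pi(g)\xi$ and $\zeta = \pi(h)\xi$ with $g,h \in G$, a direct computation using unitarity of $\pi$ gives
\[
	\pi_{\eta,\zeta}(s) = \langle \pi(s)\pi(g)\xi, \pi(h)\xi\rangle = \langle \pi(h^{-1}sg)\xi,\xi\rangle = \varphi(h^{-1}sg),
\]
where $\varphi := \pi_{\xi,\xi}$. Thus $\pi_{\eta,\zeta}$ is nothing but a two-sided translate of $\varphi$. Next I would invoke the (quasi-)invariance of $L^p(G)$ under translations: with respect to a left Haar measure $\mu_G$, left translation is an isometry of $L^p(G)$, whereas right translation by $g$ merely rescales the $L^p$-norm by a factor involving the modular function $\Delta(g)$; in either case translation maps $L^p(G)$ bijectively onto itself. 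Since $\varphi \in L^p(G)$ by hypothesis, it follows that $\pi_{\eta,\zeta} \in L^p(G)$ for all such generators.

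Finally I would pass to arbitrary elements of $\mathcal{H}_0$ by (bi)linearity: if $\eta = \sum_i c_i \pi(g_i)\xi$ and $\zeta = \sum_j d_j \pi(h_j)\xi$ are finite linear combinations, then
\[
	\pi_{\eta,\zeta} = \sum_{i,j} c_i \overline{d_j}\, \pi_{\pi(g_i)\xi,\,\pi(h_j)\xi}
\]
is a finite linear combination of functions already shown to lie in $L^p(G)$, and hence lies in $L^p(G)$ as well, the latter being a vector space. This exhibits $\mathcal{H}_0$ as a dense subspace with all matrix coefficients in $L^p(G)$, so $\pi$ is an $L^p$-representation.

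There is no deep obstacle here; the only point requiring a moment's care is the behaviour of right translation, which is not norm-preserving but only rescales by the modular function $\Delta$. However, since Definition \ref{dfn:lprep} demands only \emph{membership} in $L^p(G)$ and no uniform norm bound, this modular factor is harmless. The remaining points to check — that $\mathcal{H}_0$ is genuinely a subspace and that it is dense — are immediate from its definition as a span and from cyclicity of $\xi$, respectively.
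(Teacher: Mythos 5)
Your proof is correct and follows essentially the same route as the paper: both take $\mathcal{H}_0 = \operatorname{span}\{\pi(s)\xi \mid s \in G\}$, reduce to matrix coefficients of orbit vectors, and observe that these are two-sided translates of $\pi_{\xi,\xi}$, so that membership in $L^p(G)$ is preserved up to a modular-function factor. The paper simply carries out the explicit change-of-variables computation yielding the factor $\Delta_G(s_1^{-1})$, which is exactly the quasi-invariance of $L^p(G)$ under translation that you invoke.
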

\begin{proof}
The case $p = \infty$ is trivial, so suppose that $p\in [1,\infty)$.

Since $\pi$ has cyclic vector $\xi \in \mathcal{H}$, the subspace $\mathcal{H}_0= \mathrm{span} \{ \pi(s)\xi \mid s\in G \}$ is dense in $\mathcal{H}$.
Let $s_1$ and $s_2$ be arbitrary elements of $G$, and let $\zeta_1 := \pi(s_1) \xi$ and $\zeta_2 := \pi(s_2) \xi$. Then
	\begin{align*}
		\int \vert \pi_{\zeta_1,\zeta_2} \vert ^p d\mu_G
		= \int \vert	\langle \pi(s_2^{-1} t s_1) \xi,\xi \rangle \vert^p d\mu_G(t)
		= \Delta_G (s_1^{-1}) \int \vert \pi_{\xi,\xi}\vert^p d\mu_G,
	\end{align*}
where $\mu_G$ denotes a Haar measure on $G$ and $\Delta_G$ denotes the associated modular function. Hence, $\pi_{\zeta_1,\zeta_2}\in L^p(G)$ for all $\zeta_1,\zeta_2 \in \lbrace \pi(s)\xi \mid
	s\in G\}$. This implies that $\pi_{\zeta_1,\zeta_2}\in L^p(G)$ for all $\zeta_1,\zeta_2 \in 
	\mathcal{H}_0$, which completes the proof.
\end{proof}

Recall the construction of the potentially exotic group $C^*$-algebra $C^*_S(G)$ from Section \ref{subsec:constructions}. Our main interest is to consider the subset $S$ of $\widehat{G}$ consisting of (equivalence classes of) $L^{p+}$-representations (or $L^p$-representations) of a locally compact group $G$. Note that in general, however, these sets may be empty, which is for example the case for non-compact locally compact abelian groups.

For a locally compact group $G$ and $p\in[2,\infty]$, let $C^*_{L^p}(G)$ and $C^*_{L^{p+}}(G)$ be the group $C^*$-algebras obtained as the completions of $C_c(G)$ with respect to the $C^*$-norms
\begin{align*}
	\|\cdot \|_{L^p} &\colon C_c(G)\to [0,\infty),\, f \mapsto 
	\sup \lbrace \|\pi(f)\| \mid \pi \mbox{ is a } L^p\mbox{-representation}\} \mbox { and}\\
	\|\cdot \|_{L^{p+}}& \colon C_c(G)\to [0,\infty),\, f \mapsto 
	\sup \{ \|\pi(f)\| \mid \pi \mbox{ is a } L^{p+}\mbox{-representation}\},
\end{align*}
respectively.

It is well known that $C^*(G) = C^*_{L^{\infty}}(G)$ and $C^*_r(G) = C^*_{L^2}(G)$. The following result follows directly from Proposition \ref{prp:idealfs}.
\begin{prp} \label{prp:dual_C_lp_ideal}
	Let $p\in [2,\infty]$. Then the dual spaces $\widehat{C^*_{L^{p}}(G)}$ and $\widehat{C^*_{L^{p+}}(G)}$ of $C^*_{L^{p}}(G)$ and $C^*_{L^{p+}}(G)$, respectively, are ideals in $\widehat{G}$.	
\end{prp}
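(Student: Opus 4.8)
The plan is to reduce the statement to Proposition~\ref{prp:idealfs}. First I would note that $C^*_{L^p}(G)$ and $C^*_{L^{p+}}(G)$ are indeed group $C^*$-algebras of $G$ in the sense used there: the inequalities $\|f\|_u \ge \|f\|_{L^p} \ge \|f\|_r$ hold because $\lambda$ is itself an $L^2$-representation (for $\xi,\eta\in C_c(G)$ the coefficient $\lambda_{\xi,\eta}$ has compact support), hence an $L^p$- and $L^{p+}$-representation for every $p\ge 2$, so that the regular norm is dominated by $\|\cdot\|_{L^p}$ and $\|\cdot\|_{L^{p+}}$. By Proposition~\ref{prp:idealfs} it then suffices to show that the dual spaces $C^*_{L^p}(G)^*$ and $C^*_{L^{p+}}(G)^*$ are $G$-invariant ideals of $B(G)$. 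Here I would use the standard identification, valid for any family $\mathcal F$ of representations, that $C^*_{\mathcal F}(G)^*$ is the weak*-closed linear span $E_{\mathcal F}$ of the matrix coefficients of the representations in $\mathcal F$, which follows from $C^*_{\mathcal F}(G)^* = \bigl(\bigcap_{\pi\in\mathcal F}\ker\pi_*\bigr)^{\perp} = \overline{\operatorname{span}}^{\,w*}\bigcup_{\pi\in\mathcal F}(\ker\pi_*)^{\perp}$; we take $\mathcal F$ to be the $L^p$- (resp.\ $L^{p+}$-) representations.

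The $G$-invariance is immediate, since the left and right translates of a matrix coefficient $\pi_{\xi,\eta}$ are again matrix coefficients of the same $\pi$, so the spanning set of $E_{L^p}$ (resp.\ $E_{L^{p+}}$) is stable under the two $G$-actions, and hence so is its weak*-closure.

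The core of the argument is the ideal property, and this is where the tensor product enters. Let $\pi$ be an $L^p$-representation with a dense subspace $\mathcal H_0\subset\mathcal H_\pi$ such that $\pi_{\xi,\eta}\in L^p(G)$ for $\xi,\eta\in\mathcal H_0$, and let $\rho$ be an arbitrary unitary representation. For $\xi_i\in\mathcal H_0$ and $\zeta_i\in\mathcal H_\rho$ one computes
\[
	(\pi\otimes\rho)_{\xi_1\otimes\zeta_1,\,\xi_2\otimes\zeta_2} = \pi_{\xi_1,\xi_2}\cdot\rho_{\zeta_1,\zeta_2},
\]
and since $\pi_{\xi_1,\xi_2}\in L^p(G)$ while $\rho_{\zeta_1,\zeta_2}$ is bounded, the product lies in $L^p(G)$. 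As $\mathcal H_0\otimes\mathcal H_\rho$ is dense in $\mathcal H_\pi\otimes\mathcal H_\rho$, this shows that $\pi\otimes\rho$ is again an $L^p$-representation; the $L^{p+}$-case follows identically, as $\pi\otimes\rho$ is then $L^{(p+\varepsilon)}$ for every $\varepsilon>0$. In particular every matrix coefficient of $\pi\otimes\rho$ lies in $E_{L^p}$ (resp.\ $E_{L^{p+}}$). Since the pointwise product of a matrix coefficient of $\pi$ with a matrix coefficient of $\rho$ is exactly such a coefficient of $\pi\otimes\rho$, and since every element of $B(G)$ is a matrix coefficient of some unitary representation, we obtain $\varphi\psi\in E_{L^p}$ whenever $\varphi$ is a matrix coefficient of an $L^p$-representation and $\psi\in B(G)$.

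Finally I would pass from matrix coefficients to the whole of $E_{L^p}$. For fixed $\psi\in B(G)$ the map $\varphi\mapsto\varphi\psi$ on $B(G)=C^*(G)^*$ is the adjoint of the bounded operator $x\mapsto(\mathrm{id}\otimes\psi)(\Delta(x))$ on $C^*(G)$, hence weak*-continuous; as $E_{L^p}$ is weak*-closed, the inclusion $\varphi\psi\in E_{L^p}$ extends from the spanning matrix coefficients to all $\varphi\in E_{L^p}$, and likewise for $E_{L^{p+}}$. Together with the $G$-invariance this shows that both dual spaces are $G$-invariant ideals of $B(G)$, so the claim follows from Proposition~\ref{prp:idealfs}. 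I expect the main obstacle to be precisely this last step: one must check that the module action of $B(G)$ is compatible with the weak*-topology, so that the ideal property, verified on genuine matrix coefficients via the elementary fact $L^p\cdot L^\infty\subset L^p$, survives passage to the weak*-closure. The integrability computation itself is routine.
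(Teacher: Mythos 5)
Your proof is correct and follows the paper's own route: the paper proves this proposition by simply invoking Proposition \ref{prp:idealfs}, and your argument supplies exactly the verification that this invocation requires — identifying the dual spaces with the weak*-closed spans of matrix coefficients of $L^p$- (resp.\ $L^{p+}$-) representations, checking $G$-invariance, and obtaining the ideal property from the tensor-product computation together with $L^p\cdot L^\infty\subset L^p$. The weak*-continuity issues you flag at the end are standard properties of the dual pairing $B(G)=C^*(G)^*$ (pointwise multiplication by a fixed $\psi\in B(G)$ is the transpose of a bounded map on $C^*(G)$), so they do not constitute a gap.
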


\section{On the unitary dual of Kunze-Stein groups} \label{sec:ksgroups}
Recall that a locally compact group $G$ is called a Kunze-Stein group if the convolution product $m \colon C_c(G) \times C_c(G) \to C_c(G), \; (f,g) \mapsto f \ast g$ extends to a bounded bilinear map $L^q(G) \times L^2(G) \to L^2(G)$ for all $q \in [1,2)$.

This property originated from the work of Kunze and Stein \cite{MR0163988}, who showed the above property for the group $\mathrm{SL}(2,\mathbb{R})$. In 1978, Cowling proved that connected semisimple Lie groups with finite center are Kunze-Stein groups \cite{MR0507240}. Other classes of Kunze-Stein groups were provided in \cite{MR0507240}, \cite{MR0936361} and \cite{veca}.

The aim of this section is to prove Theorem \ref{thm:KSFell}, starting from the following result by Samei and Wiersma (see \cite[Theorem 5.3]{sameiwiersma2}).
	\begin{thm}[Samei-Wiersma]\label{thm:samei_wiersma_main}\label{thm:sameiwiersma}
Let $G$ be a Kunze-Stein group, and let $2 \leq p < \infty$. For a unitary representation $\pi \colon G \to \mathcal{B}(\mathcal{H})$, the following are equivalent:
\begin{enumerate}[(i)]
			\item The representation $\pi$ extends to a *-representation of $C^*_{L^{p+}}(G)$.
			\item We have $B_\pi \subset L^{p+\varepsilon}(G)$ for all $\varepsilon > 0$,
\end{enumerate}
where $B_\pi$ denotes the closure in the weak$^*$-topology (on $B(G)$) of the linear span of all matrix coefficients of $\pi$, i.e.~the functions $\pi_{\xi,\eta}$, with $\xi,\eta \in \mathcal{H}$.
\end{thm}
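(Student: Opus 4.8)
The plan is to prove the two implications separately; the substance is entirely in the implication (i) $\Rightarrow$ (ii), while (ii) $\Rightarrow$ (i) is immediate. Indeed, if $B_\pi \subset L^{p+\varepsilon}(G)$ for all $\varepsilon > 0$, then in particular every matrix coefficient $\pi_{\xi,\eta}$ lies in $L^{p+\varepsilon}(G)$, because the linear span of the $\pi_{\xi,\eta}$ is contained in $B_\pi$. Taking $\mathcal{H}_0 = \mathcal{H}$ in Definition \ref{dfn:lprep} shows that $\pi$ is itself an $L^{p+}$-representation, so by the very definition of $\|\cdot\|_{L^{p+}}$ as a supremum over $L^{p+}$-representations we have $\|\pi(f)\| \le \|f\|_{L^{p+}}$ for all $f \in C_c(G)$, i.e.~$\pi$ extends to $C^*_{L^{p+}}(G)$.

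For (i) $\Rightarrow$ (ii), I would first reduce to a statement about the dual space. Write $B_{L^{p+}}(G) := C^*_{L^{p+}}(G)^*$, viewed as a weak$^*$-closed subspace of $B(G) = C^*(G)^*$ via the quotient map $C^*(G) \twoheadrightarrow C^*_{L^{p+}}(G)$. If $\pi$ factors through $C^*_{L^{p+}}(G)$, every matrix coefficient of $\pi$ defines a bounded functional on $C^*_{L^{p+}}(G)$ and hence lies in $B_{L^{p+}}(G)$; since the latter is weak$^*$-closed, all of $B_\pi$ is contained in it. It therefore suffices to prove the inclusion $B_{L^{p+}}(G) \subset L^{p+\varepsilon}(G)$ for every $\varepsilon > 0$.

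The crucial input, and the place where the Kunze-Stein property enters, is a generalized Kunze-Stein inequality: for each $\varepsilon > 0$ there is a constant $C_{p,\varepsilon}$ such that
\[
	\|f\|_{L^{p+}} \le C_{p,\varepsilon}\,\|f\|_{L^{(p+\varepsilon)'}} \qquad (f \in C_c(G)),
\]
where $(p+\varepsilon)'$ denotes the conjugate exponent of $p+\varepsilon$, which satisfies $1 < (p+\varepsilon)' < 2$ because $p \ge 2$. Granting this, the proof finishes by duality: for $\varphi \in B_{L^{p+}}(G)$ with $\|\varphi\| \le 1$ one has $|\langle \varphi, f\rangle| \le \|f\|_{L^{p+}} \le C_{p,\varepsilon}\|f\|_{L^{(p+\varepsilon)'}}$ for all $f \in C_c(G)$, so $\varphi$ extends to a bounded functional on $L^{(p+\varepsilon)'}(G)$. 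As $1 < (p+\varepsilon)' < \infty$, this space is reflexive with dual $L^{p+\varepsilon}(G)$, whence $\varphi \in L^{p+\varepsilon}(G)$; letting $\varepsilon \to 0$ gives $B_{L^{p+}}(G) \subset \bigcap_{\varepsilon > 0} L^{p+\varepsilon}(G)$, as required.

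The hard part will be establishing the generalized Kunze-Stein inequality. Since $\|f\|_{L^{p+}} = \sup_\sigma \|\sigma(f)\|$ over $L^{p+}$-representations $\sigma$, and each such $\sigma$ splits into cyclic pieces, it suffices to bound $\|\sigma(f)\|$ for a cyclic $L^{p+}$-representation with cyclic vector $\xi$ such that $\sigma_{\xi,\xi} \in L^{p+\varepsilon}(G)$ (available via Proposition \ref{prp:cyclic_lp_reps}). Setting $g = f^* * f$, so that $\|\sigma(f)\|^2 = \|\sigma(g)\|$ with $\sigma(g) \ge 0$, the spectral radius formula combined with cyclicity of $\xi$ yields
\[
	\|\sigma(g)\| = \lim_{k \to \infty} \langle \sigma(g^{*k})\xi, \xi\rangle^{1/k} = \lim_{k \to \infty}\Big( \int_G g^{*k}\,\sigma_{\xi,\xi}\,\mathrm{d}\mu_G \Big)^{1/k}.
\]
Applying Hölder's inequality with $\sigma_{\xi,\xi} \in L^{p+\varepsilon}(G)$ reduces everything to controlling the convolution spectral radius $\lim_k \|g^{*k}\|_{L^{(p+\varepsilon)'}}^{1/k}$. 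This is the genuinely delicate step: because $L^{(p+\varepsilon)'}(G)$ is not closed under convolution, it cannot be bounded by a power of $\|g\|_{L^{(p+\varepsilon)'}}$, and one must feed in the Kunze-Stein convolution estimate $L^q * L^2 \to L^2$ for $q = (p+\varepsilon)' < 2$ --- in the refined, $L^p$-representation form developed by Cowling \cite{MR0507240} --- to obtain the required bound in terms of $\|f\|_{L^{(p+\varepsilon)'}}$.
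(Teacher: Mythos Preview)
The paper does not prove this theorem: it is quoted verbatim as \cite[Theorem 5.3]{sameiwiersma2} and used as a black box, so there is no proof in the paper to compare your proposal against.

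That said, your outline is the right strategy and is essentially how Samei and Wiersma (building on Cowling's ideas in \cite{MR0507240}) prove the result: the direction (ii)$\Rightarrow$(i) is trivial, and for (i)$\Rightarrow$(ii) one shows the inclusion $C^*_{L^{p+}}(G)^* \subset L^{p+\varepsilon}(G)$ by establishing the norm inequality $\|f\|_{L^{p+}} \le C_{p,\varepsilon}\|f\|_{L^{(p+\varepsilon)'}}$ on $C_c(G)$ and then dualizing. Two points deserve more care than you give them. First, the identity $\|\sigma(g)\| = \lim_k \langle \sigma(g^{*k})\xi,\xi\rangle^{1/k}$ does not follow from cyclicity of $\xi$ for $\sigma(G)$ alone, since $\xi$ need not be cyclic for the single operator $\sigma(g)$; the usual remedy is to test against vectors $\sigma(h)\xi$ with $h\in C_c(G)$ and note that the extra convolutions by $h^*$ and $h$ disappear under the $k$-th root. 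Second, your final paragraph stops short of actually carrying out the convolution estimate: the Kunze-Stein inequality gives $L^{q}*L^2\to L^2$ for $q<2$, and an interpolation (or the argument in \cite{MR0507240}) is needed to turn this into a bound of the form $\|g^{*k}\|_{L^{q}}^{1/k} \le C\|g\|_{L^{q}}$ for $g=f^**f$ and $q=(p+\varepsilon)'$. Once these two gaps are filled the proof is complete.
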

We first prove the following result.
\begin{prp}\label{prp:KS_int_irr_rep}
Let $G$ be a Kunze-Stein group, let $p \in [2,\infty]$, let $\pi$ be an irreducible unitary representation of $G$, and let $\xi \in \mathcal{H}$ be a nonzero vector. Then $\pi$ is an $L^{p+}$-representation if and only if $\pi_{\xi,\xi}\in L^{p+\varepsilon}(G)$ for all $\varepsilon > 0$.
\end{prp}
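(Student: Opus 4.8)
The plan is to prove the two implications separately, with the ``if'' direction following quickly from irreducibility and Proposition \ref{prp:cyclic_lp_reps}, and the ``only if'' direction resting on the Samei--Wiersma characterization (Theorem \ref{thm:samei_wiersma_main}), which is where the Kunze--Stein hypothesis does the real work.

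For the ``if'' direction, I would observe that since $\pi$ is irreducible, every nonzero vector is cyclic; in particular the given $\xi$ is a cyclic vector. Fixing $\varepsilon > 0$ and assuming $\pi_{\xi,\xi} \in L^{p+\varepsilon}(G)$, Proposition \ref{prp:cyclic_lp_reps} immediately yields that $\pi$ is an $L^{p+\varepsilon}$-representation. Since this holds for every $\varepsilon > 0$, $\pi$ is an $L^{p+}$-representation by Definition \ref{dfn:lprep}. This argument is uniform in $p \in [2,\infty]$, the endpoint $p = \infty$ being trivial as every matrix coefficient is bounded, hence automatically in $L^\infty(G)$.

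For the ``only if'' direction, suppose $\pi$ is an $L^{p+}$-representation. The case $p = \infty$ is again trivial, so assume $2 \leq p < \infty$. By the very definition of the norm $\|\cdot\|_{L^{p+}}$ as a supremum over all $L^{p+}$-representations, we have $\|\pi(f)\| \leq \|f\|_{L^{p+}}$ for all $f \in C_c(G)$, so $\pi$ extends to a ${}^*$-representation of $C^*_{L^{p+}}(G)$. This is precisely condition (i) of Theorem \ref{thm:samei_wiersma_main}, so by the equivalence established there we obtain $B_\pi \subset L^{p+\varepsilon}(G)$ for all $\varepsilon > 0$. Since $\pi_{\xi,\xi}$ is a matrix coefficient of $\pi$, it lies in $B_\pi$, and we conclude $\pi_{\xi,\xi} \in L^{p+\varepsilon}(G)$ for all $\varepsilon > 0$, as desired.

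The crux of the argument, and the only genuinely nontrivial step, is the passage from ``$\pi$ is an $L^{p+}$-representation'' (a statement involving only \emph{some} dense subspace of good vectors, possibly depending on $\varepsilon$) to integrability of the diagonal coefficient $\pi_{\xi,\xi}$ for the \emph{arbitrary} fixed vector $\xi$. It is exactly this upgrade from a dense set of vectors to all vectors that the Kunze--Stein property furnishes, packaged in Theorem \ref{thm:samei_wiersma_main}; I would expect any attempt at a direct proof --- say, approximating $\xi$ in norm by vectors from the dense subspace --- to fail, since norm convergence of vectors yields only uniform convergence of the associated matrix coefficients, which is far too weak to control $L^{p+\varepsilon}$-integrability on a noncompact group. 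The one remaining point requiring care is the endpoint $p = \infty$, which must be handled by hand since Theorem \ref{thm:samei_wiersma_main} is stated only for $2 \leq p < \infty$.
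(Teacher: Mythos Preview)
Your proof is correct and follows essentially the same approach as the paper: handle $p=\infty$ trivially, use Theorem \ref{thm:samei_wiersma_main} for the forward direction, and use cyclicity of $\xi$ together with Proposition \ref{prp:cyclic_lp_reps} for the converse. The paper's proof is more terse but invokes exactly the same two ingredients in the same way.
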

\begin{proof}
The case $p = \infty$ is trivial, so suppose $p\in [2,\infty)$. If $\pi$ is an $L^{p+}$-representation then $\pi_{\xi,\xi}\in L^{p+\varepsilon}(G)$ for all $\varepsilon > 0$ by Theorem \ref{thm:samei_wiersma_main}. On the other hand, suppose that $\pi_{\xi,\xi}\in L^{p+\varepsilon}(G)$ for all $\varepsilon > 0$. Since $\xi$ is a cyclic vector, the representation $\pi$ is an $L^{p+}$-representation by Proposition \ref{prp:cyclic_lp_reps}.
\end{proof}
\begin{prp} \label{prp:weakcon_and_int}
Let $G$ be a Kunze-Stein group, let $p\in [2,\infty)$, let $\pi$ be an $L^{p+}$-representation of $G$ and $\rho$ another unitary representation of $G$ which is weakly	contained in $\pi$. Then $\rho$ is an $L^{p+}$-representation.
\end{prp}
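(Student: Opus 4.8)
The plan is to avoid trying to transfer $L^p$-integrability of matrix coefficients directly along the weak containment, and instead to pass everything through the $C^*$-algebraic characterization of $L^{p+}$-representations supplied by Theorem \ref{thm:samei_wiersma_main}. The case $p=\infty$ is not at issue here (the statement is for $p\in[2,\infty)$), so I would work throughout with a fixed $p\in[2,\infty)$ and a fixed $\varepsilon>0$ at the end.

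First I would note that since $\pi$ is an $L^{p+}$-representation, it occurs among the representations defining the supremum in the norm $\|\cdot\|_{L^{p+}}$, so $\|\pi(f)\|\leq\|f\|_{L^{p+}}$ for every $f\in C_c(G)$; hence $\pi$ extends to a ${}^*$-representation of $C^*_{L^{p+}}(G)$, i.e.~$\pi$ satisfies condition (i) of Theorem \ref{thm:samei_wiersma_main}. The key structural input is then that weak containment of unitary representations is equivalent to the corresponding inequality of $C^*$-seminorms (Fell's theory, cf.~\cite{MR0458185}): since $\rho$ is weakly contained in $\pi$, we have $\|\rho(f)\|\leq\|\pi(f)\|$ for all $f\in C_c(G)$. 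Combining the two inequalities yields $\|\rho(f)\|\leq\|\pi(f)\|\leq\|f\|_{L^{p+}}$, so $\rho$ likewise extends to a ${}^*$-representation of $C^*_{L^{p+}}(G)$, i.e.~$\rho$ satisfies condition (i) of Theorem \ref{thm:samei_wiersma_main}.

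Finally I would apply the nontrivial implication (i)$\Rightarrow$(ii) of Theorem \ref{thm:samei_wiersma_main} to $\rho$, obtaining $B_\rho\subset L^{p+\varepsilon}(G)$ for every $\varepsilon>0$. In particular every matrix coefficient $\rho_{\xi,\eta}$ lies in $L^{p+\varepsilon}(G)$, so taking the dense subspace in Definition \ref{dfn:lprep} to be all of $\mathcal{H}$ shows that $\rho$ is an $L^{p+\varepsilon}$-representation for every $\varepsilon>0$, i.e.~an $L^{p+}$-representation, as desired.

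The substantive difficulty is entirely packaged inside Theorem \ref{thm:samei_wiersma_main}, where the Kunze-Stein hypothesis is actually used; in the present argument the only genuinely delicate point is conceptual rather than computational. A naive attempt would try to read off $L^{p+\varepsilon}$-integrability of the matrix coefficients of $\rho$ from those of $\pi$ using the definition of weak containment directly. This cannot work, because weak containment only provides approximation of diagonal matrix coefficients uniformly on compact subsets of $G$ — a purely local condition — whereas $L^p$-integrability is a global condition that such local approximation does not control. The whole proof therefore hinges on replacing the local notion of weak containment by the global $C^*$-norm inequality $\|\rho(f)\|\leq\|\pi(f)\|$ and then invoking the equivalence (i)$\Leftrightarrow$(ii), which is exactly where the Kunze-Stein property does its work.
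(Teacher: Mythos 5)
Your proof is correct and takes essentially the same route as the paper's: both arguments transfer the extension of $\pi$ to a ${}^*$-representation of $C^*_{L^{p+}}(G)$ over to $\rho$ via weak containment, and then invoke Theorem \ref{thm:samei_wiersma_main} to conclude that $\rho$ is an $L^{p+}$-representation. The only cosmetic difference is that you phrase weak containment as the $C^*$-norm inequality $\|\rho(f)\|\leq\|\pi(f)\|$ on $C_c(G)$, whereas the paper phrases it as the kernel inclusion $\ker\pi_*\subset\ker\rho_*$ of the integrated forms; these are equivalent formulations of the same step.
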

\begin{proof}
Let $\pi_*$ and $\rho_*$ denote the integrated forms of $\pi$ and $\rho$, respectively. Since $\pi$ is an $L^{p+}$-representation, $\pi_*$ factors through the canonical quotient map $C^*(G) \to C^*_{L^{p+}}(G)$. Since $\rho$ is weakly contained in $\pi$, we have that $\ker \pi_* \subset \ker \rho_*$. Hence $\rho_*$ factors through the quotient map $C^*(G) \to C^*_{L^{p+}}(G)$ as well. Equivalently,	$\rho$ extends to a *-representation of $C^*_{L^{p+}}(G)$. By Theorem \ref{thm:samei_wiersma_main}, it follows that $\rho$ is an $L^{p+}$-representation.
\end{proof}
For $p \in [2,\infty]$, let
\begin{align*}
		\widehat{G}_{L^p} &:= \{ [\pi]\in \widehat{G}\mid \pi \mbox{ is an } L^p\mbox{-representation} \}
		\mbox{ and }\\
		\widehat{G}_{L^{p+}} &:= \{ [\pi]\in \widehat{G}\mid \pi \mbox{ is an } L^{p+}\mbox{-representation} \}
\end{align*}
If $2 \leq q < p \leq \infty$, then we have
\begin{align*}
		\widehat{G}_{L^q} \subset \widehat{G}_{L^{q+}}\subset \widehat{G}_{L^{p}}.
\end{align*}
As mentioned before, the sets $\widehat{G}_{L^p}$ and $\widehat{G}_{L^{p+}}$ could, in general, be empty. However, for Kunze-Stein groups, we can prove Theorem \ref{thm:KSFell}. First, we identify the dual space $\widehat{C^*_{L_{p+}}(G)}$ with $\widehat{G}_{L^{p+}}$.
\begin{lem}\label{lem:lp_closed_prop}
		Let $G$ be a Kunze-Stein group and $p \in [2,\infty]$. Then
		\begin{align*}
			\widehat{C^*_{L_{p+}}(G)} = \widehat{G}_{L^{p+}}.
		\end{align*}
\end{lem}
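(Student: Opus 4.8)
The plan is to prove the two inclusions separately, using the standard $C^*$-algebraic fact that since $C^*_{L^{p+}}(G)$ is a quotient of $C^*(G)$ and $\widehat{C^*(G)} = \widehat{G}$, its spectrum is identified with
\[
	\widehat{C^*_{L^{p+}}(G)} = \{ [\pi] \in \widehat{G} \mid \pi_* \text{ factors through the canonical quotient } C^*(G) \twoheadrightarrow C^*_{L^{p+}}(G) \},
\]
i.e.\ the irreducible $\pi$ that extend to a $*$-representation of $C^*_{L^{p+}}(G)$. The case $p = \infty$ is trivial, since then both sides equal all of $\widehat{G}$, so I assume $p \in [2,\infty)$ throughout.

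For the inclusion $\widehat{G}_{L^{p+}} \subseteq \widehat{C^*_{L^{p+}}(G)}$, which requires neither the Kunze-Stein property nor the Samei-Wiersma theorem, I would argue directly from the definition of the norm. If $[\pi] \in \widehat{G}_{L^{p+}}$, then $\pi$ is an irreducible $L^{p+}$-representation, so by the very definition of $\|\cdot\|_{L^{p+}}$ as a supremum over all $L^{p+}$-representations one has $\|\pi(f)\| \leq \|f\|_{L^{p+}}$ for every $f \in C_c(G)$. Hence $\pi_*$ factors through $C^*_{L^{p+}}(G)$, and since $\pi$ is irreducible this gives $[\pi] \in \widehat{C^*_{L^{p+}}(G)}$.

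For the reverse inclusion I would start from $[\pi] \in \widehat{C^*_{L^{p+}}(G)}$, so $\pi$ is irreducible and extends to a $*$-representation of $C^*_{L^{p+}}(G)$. By Theorem \ref{thm:sameiwiersma} (Samei-Wiersma), this is equivalent to $B_\pi \subset L^{p+\varepsilon}(G)$ for all $\varepsilon > 0$. In particular, fixing any nonzero vector $\xi \in \mathcal{H}$, the diagonal matrix coefficient $\pi_{\xi,\xi}$ lies in $L^{p+\varepsilon}(G)$ for all $\varepsilon > 0$. Invoking Proposition \ref{prp:KS_int_irr_rep} (which is where the Kunze-Stein property enters, via the cyclic-vector computation of Proposition \ref{prp:cyclic_lp_reps}), this single integrability condition on $\pi_{\xi,\xi}$ already forces $\pi$ to be an $L^{p+}$-representation, so $[\pi] \in \widehat{G}_{L^{p+}}$.

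The proof is thus essentially a repackaging: for an irreducible $\pi$, the two nontrivial ingredients — the Samei-Wiersma characterization of extension to $C^*_{L^{p+}}(G)$ in terms of integrability of the entire weak$^*$-closed span $B_\pi$, and Proposition \ref{prp:KS_int_irr_rep}'s reduction from a single diagonal coefficient to the whole representation — combine to show that the conditions ``$B_\pi \subset L^{p+\varepsilon}(G)$ for all $\varepsilon$'' and ``$\pi$ is an $L^{p+}$-representation'' coincide. The only point demanding care is the identification of the spectrum of the quotient algebra with the correct subset of $\widehat{G}$ and checking that the irreducibility hypothesis is carried correctly through both invocations; there is no genuinely hard analytic step remaining, as all of the analysis has already been absorbed into the two cited propositions.
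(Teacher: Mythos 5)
Your proof is correct and takes essentially the same route as the paper: the forward inclusion follows from the definition of $\|\cdot\|_{L^{p+}}$ as a supremum over $L^{p+}$-representations, and the reverse inclusion from Theorem \ref{thm:sameiwiersma}. Your detour through Proposition \ref{prp:KS_int_irr_rep} is harmless but unnecessary, since $B_\pi \subset L^{p+\varepsilon}(G)$ already contains \emph{all} matrix coefficients $\pi_{\xi,\eta}$, so $\pi$ is an $L^{p+}$-representation directly from Definition \ref{dfn:lprep} with $\mathcal{H}_0 = \mathcal{H}$.
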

\begin{proof}
		Let $\pi \in \widehat{G}_{L^{p+}}$ be an irreducible unitary representation. By definition of $C^*_{L^{p+}}(G)$, the representation $\pi$ extends to $C^*_{L^{p+}}(G)$. This implies
		that the integrated form $\pi_*$ of $\pi$ lies in
		$\widehat{C^*_{L^{p+}}(G)}$. 
		On the other hand, if $\pi_* \in \widehat{C^*_{L^{p+}}(G)}$
		then by Theorem \ref{thm:samei_wiersma_main}, $\pi$ is an $L^{p+}$-representation and hence $\pi \in \widehat{G}_{L^{p+}}$.
\end{proof}
\begin{proof}[Proof of Theorem \ref{thm:KSFell}]
The statement of the theorem now follows directly from Lemma \ref{lem:lp_closed_prop}, Proposition \ref{prp:dual_C_lp_ideal} and Proposition \ref{prp:weakcon_and_int} (see Section \ref{subsec:unitarydualfelltopology} for a description of the closure of a subset of $\widehat{G}$).
\end{proof}

\section{Simple Lie groups of real rank one} \label{sec:realrankone}
In this section, we investigate the group $C^*$-algebras $C^*_{L^{p+}}(G)$ for connected simple Lie groups $G$ with real rank one and finite center. In particular, we prove Theorem \ref{thm:classicalrealrankone}.

From now on, we assume $G$ to be a connected non-compact simple Lie group with finite center and $K$ a maximal compact subgroup of $G$. It is well known that a pair $(G,K)$ consisting of such groups is a Gelfand pair (see \cite[Corollary 1.5.6]{MR0954385} or \cite[Chapter VI, Theorem 1.1]{MR1834454}). Recall also that all maximal compact subgroups of $G$ are conjugate under an inner automorphism of $G$.

Let $\mathfrak{g}$ be the Lie algebra of $G$, let $\mathfrak{k}$ be the Lie algebra of $K$, and let $\mathfrak{g}= \mathfrak{k} \oplus \mathfrak{p}$
be the corresponding Cartan decomposition of $\mathfrak{g}$. After fixing a maximal abelian subalgebra $\mathfrak{a}$ of $\mathfrak{p}$ and choosing a positive root system $\Delta^+(\mathfrak{g},\mathfrak{a})$ from the root system $\Delta(\mathfrak{g},\mathfrak{a})$, we obtain the polar decomposition $G = K\overline{A^+}K$ of $G$, where $A^+ = \exp(\mathfrak{a}^+)$ and $\mathfrak{a}^+$ is the Weyl chamber corresponding to $\Delta^+(\mathfrak{g},\mathfrak{a})$.

The Haar measure $\mu_G$ on $G$ can be normalized in such a way that for every $f \in C_c(G)$, we have
\begin{align} \label{eq:polardecompOfInt}
	\int_G f d\mu_G = \int_K \int_{\overline{A^+}}\int_K f(k_1ak_2)J(a)\,d\mu_K(k_1)\,d\mu_A(a)\,d\mu_K(k_2)
\end{align}
where $\mu_K$ is the normalized Haar measure on $K$, the measure $\mu_A$ is
the Haar measure on $A$, and for $a \in A$,
\begin{align} \label{eq:j}
	J(a) = \prod_{\alpha \in \Delta^+(\mathfrak{g},\mathfrak{a})}\left(e^{\alpha(\log a)}- e^{-\alpha(\log a)} \right)^{\dim (\mathfrak{g}_\alpha)},
\end{align}
where $\mathfrak{g}_{\alpha}$ is the root space corresponding to $\alpha$ (see \cite[Proposition 2.4.6]{MR0954385}). For details on the assertions made above, we refer to \cite{MR0954385}, \cite{MR1834454} and \cite{MR1920389}.

There is an intimate relationship between the class one representations of a connected simple Lie group and the class one representations of a finite covering group of this Lie group, as is shown by the following result. This result is certainly known, but since we could not find an appropriate reference, we give a proof.
\begin{lem} \label{prp:bij_class_one_local_iso_grps}
Let $G$ be a connected non-compact simple Lie group with finite center, $K < G$ a maximal compact subgroup and $q \colon \widetilde{G} \to G$ a connected finite covering group. Then $\widetilde{G}$ is a connected non-compact simple Lie group, $\tilde{K} := q^{-1}(K)$ is a maximal compact subgroup of $\widetilde{G}$, and $q$ induces a bijection
\begin{align*}
		q^*\colon \left( \widehat{G}_K\right)_1 \to \left( \widehat{\widetilde{G}}_{\tilde{K}}\right)_1,\,
		[\pi]\mapsto [\pi\circ q].
\end{align*}
Furthermore, suppose that $p\in [2,\infty)$. Then $[\pi] \in \left( \widehat{G}_K\right)_1$ is represented by an $L^{p+}$-representation if and only if $q^*([\pi])$ is represented by an $L^{p+}$-representation.
\end{lem}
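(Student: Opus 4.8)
The plan is to reduce the statement to the $L^{p+\varepsilon}$-integrability of a single diagonal matrix coefficient on each of the two groups, and then to compare these two integrability conditions directly via the covering map $q$. Both $G$ and $\widetilde{G}$ are connected semisimple Lie groups with finite center (for $\widetilde{G}$ this follows from the first part of the lemma, since $\ker q$ is finite and central), so by Cowling's theorem both are Kunze-Stein groups and Proposition \ref{prp:KS_int_irr_rep} applies to each. Let $\pi \colon G \to \mathcal{B}(\mathcal{H})$ represent $[\pi] \in (\widehat{G}_K)_1$ and choose a $\pi(K)$-invariant unit vector $\xi \in \mathcal{H}^K$. Since $\tilde{K} = q^{-1}(K)$, for every $\tilde{k} \in \tilde{K}$ we have $(\pi \circ q)(\tilde{k})\xi = \pi(q(\tilde{k}))\xi = \xi$, so $\xi$ is a $\tilde{K}$-invariant unit vector for $\pi \circ q$, which represents $q^*([\pi])$. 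Writing $\varphi := \pi_{\xi,\xi}$, the relevant diagonal matrix coefficient of $\pi \circ q$ is exactly $(\pi \circ q)_{\xi,\xi} = \varphi \circ q$. By Proposition \ref{prp:KS_int_irr_rep}, $\pi$ is an $L^{p+}$-representation of $G$ if and only if $\varphi \in L^{p+\varepsilon}(G)$ for all $\varepsilon > 0$, and $\pi \circ q$ is an $L^{p+}$-representation of $\widetilde{G}$ if and only if $\varphi \circ q \in L^{p+\varepsilon}(\widetilde{G})$ for all $\varepsilon > 0$.

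It therefore suffices to show, for every $r \in [1,\infty)$, that $\varphi \in L^r(G)$ if and only if $\varphi \circ q \in L^r(\widetilde{G})$. To this end I would normalize the Haar measures so that the finite covering $q$ satisfies the standard integration formula
\[
	\int_{\widetilde{G}} f \, d\mu_{\widetilde{G}} = \int_{G} \sum_{x \in q^{-1}(s)} f(x) \, d\mu_G(s) \qquad (f \in C_c(\widetilde{G})),
\]
which holds because $q$ is a local homeomorphism whose fibers are the cosets of $\ker q$ and hence all have constant cardinality $d := |\ker q|$. Applying this to $f = |\varphi \circ q|^r$ and using that $|\varphi \circ q|^r$ is constant on each fiber $q^{-1}(s)$, the inner sum equals $d\,|\varphi(s)|^r$, so that
\[
	\int_{\widetilde{G}} |\varphi \circ q|^r \, d\mu_{\widetilde{G}} = d \int_{G} |\varphi|^r \, d\mu_G.
\]
Hence one side is finite precisely when the other is, which gives the desired equivalence for every $r$, and in particular for $r = p + \varepsilon$.

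Combining the two reductions then yields the claim: $\pi$ is $L^{p+}$ on $G$ if and only if $\varphi \in L^{p+\varepsilon}(G)$ for all $\varepsilon > 0$, if and only if $\varphi \circ q \in L^{p+\varepsilon}(\widetilde{G})$ for all $\varepsilon > 0$, if and only if $\pi \circ q$ is $L^{p+}$ on $\widetilde{G}$. The main obstacle is the measure-theoretic step, namely justifying the integration formula for the covering and the exact relation between the two Haar measures; some care is needed because a priori the Haar measures are determined only up to a scalar, but since $q$ has finite fibers the pushforward relation above fixes a compatible normalization and the multiplicative constant $d$ is harmless for deciding finiteness. A minor point to verify along the way is that $\pi \circ q$ is again irreducible and class one, so that Proposition \ref{prp:KS_int_irr_rep} is genuinely applicable on $\widetilde{G}$; this is already part of the bijection statement established in the first part of the lemma.
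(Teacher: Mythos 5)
Your treatment of the final assertion (the $L^{p+}$-equivalence) is correct and follows essentially the same route as the paper: both groups are Kunze--Stein by Cowling's theorem (you rightly note that $\widetilde{G}$ again has finite center), so Proposition \ref{prp:KS_int_irr_rep} reduces everything to the $L^{p+\varepsilon}$-integrability of the single coefficient $\varphi = \pi_{\xi,\xi}$, and the fiber-sum formula $\int_{\widetilde{G}} |\varphi \circ q|^r \, d\mu_{\widetilde{G}} = d \int_G |\varphi|^r \, d\mu_G$ (Weil's quotient integral formula for the finite central subgroup $\ker q$, extended from $C_c$ to non-negative measurable functions) finishes the comparison. The paper invokes exactly this quotient integral formula, only less explicitly, so on this point your write-up is if anything more complete.

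However, there is a genuine gap: the lemma also asserts that $\tilde{K} = q^{-1}(K)$ is a maximal compact subgroup of $\widetilde{G}$ and that $q^*$ is a \emph{bijection}, and you prove neither --- indeed you appeal to ``the first part of the lemma'' as though it were already established, which is circular, since it is part of what must be proved. Well-definedness and injectivity of $q^*$ are easy (by surjectivity of $q$, the representation $\pi \circ q$ is irreducible, its $\tilde{K}$-fixed vectors are exactly the $\pi(K)$-fixed vectors, and any intertwiner between $\pi_1 \circ q$ and $\pi_2 \circ q$ intertwines $\pi_1$ and $\pi_2$), and maximality of $\tilde{K}$ follows by pushing forward: if $C \supset \tilde{K}$ is compact, then $q(C) \supset K$ is compact, so $q(C) = K$ by maximality of $K$, whence $C \subset q^{-1}(K) = \tilde{K}$. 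The real missing content is surjectivity of $q^*$: every class one representation $\pi$ of $\widetilde{G}$ factors through $q$. The paper proves this by taking a $\tilde{K}$-fixed unit vector $\xi$ and the associated positive definite spherical function $\tilde{\omega} = \langle \pi(\cdot)\xi,\xi\rangle$; since $\ker q \subset \tilde{K}$, the function $\tilde{\omega}$ descends to a positive definite function $\omega$ on $G$, and the cyclic (GNS) representation $\pi'$ of $G$ attached to $\omega$ satisfies $\pi' \circ q \cong \pi$, because two cyclic representations with the same cyclic diagonal matrix coefficient are unitarily equivalent. Without this step the bijection claim --- which is what the paper actually uses later, e.g.\ to identify the spherical duals of locally isomorphic groups in Proposition \ref{prp:int_const_spherical_vs_global} and in the proof of Theorem \ref{thm:classicalrealrankone} --- remains unproved.
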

\begin{proof}
	Since $q$ is a finite covering, $q$ is a proper map, and hence $\tilde{K}=q^{-1}(K)$ is a compact subgroup of $\widetilde{G}$. Suppose $C$ is a compact subgroup of $\widetilde{G}$ with $\tilde{K}\subset C$. Then $q(C)$ is a compact subgroup of $G$ and
	$K=q(\tilde{K}) \subset q(C)$. The maximality of $K$ implies $K = q(C)$. Hence,
	$C \subset q^{-1}(K) = \tilde{K}$.
	
	Note that $q^*$ is well defined. Furthermore, $q^*$ is obviously injective. It remains to show that $q^*$ is surjective. To this end, let $\pi \colon \widetilde{G} \to \mathcal{B}(\mathcal{H}_{\pi})$ be a class one representation.	Then there is a $\tilde{K}$-invariant vector $\xi\in \mathcal{H}_\pi$ of norm one, and $\tilde{\omega} = \langle \pi(\cdot) \xi,\xi \rangle$ is a positive definite spherical function. Since $G$ is a quotient group of $\widetilde{G}$ by a subgroup $\Gamma < Z(\widetilde{G})$, we know that $\tilde{\omega}$ factors through $q\colon \widetilde{G}\to G$, by, say, $\omega\colon G \to \C$. It is immediate that $\omega$ is a positive definite normalized function as well. Let $\pi'\colon G \to \mathcal{B}(\mathcal{H})$ be the cyclic
	unitary representation with cyclic vector $\xi'\in \mathcal{H}$ of $G$ and
	$\omega = \langle \pi'(\cdot)\xi', \xi' \rangle$. Then $\pi' \circ q$ is a cyclic unitary representation
	with $\langle \pi' \circ q(\cdot) \xi',\xi' \rangle
	 = \omega \circ q = \tilde{\omega} = \langle \pi(\cdot) \xi,\xi \rangle$. Hence $\pi' \circ q$
	 and $\pi$ are unitary equivalent. This implies that $\pi$ factors through $q\colon \widetilde{G}\to G$,
	 which concludes the proof of the assertion that $q^*$ is a bijection.
	 
	 Let $\pi \colon G \to \mathcal{B}(\mathcal{H}_{\pi})$ be a class one representation, let $\xi \in \mathcal{H}_\pi$ be a $\pi(K)$-invariant vector of norm one, and let $\omega = \pi_{\xi,\xi}$ the associated positive definite spherical function. If $\pi$ is an $L^{p+}$-representation, then, due to Theorem \ref{thm:samei_wiersma_main}, $\omega \in L^{p+\varepsilon}(G)$ for all $\varepsilon > 0$. It follows that
	 $\omega\circ q \in L^{p+\varepsilon}(\widetilde{G})$ for all $\varepsilon > 0$, by the quotient integral formula for Haar integrals. Hence,
	 $\pi\circ q$ is an $L^{p+}$-representation (see Proposition \ref{prp:KS_int_irr_rep}). A similar argument shows the opposite direction.
\end{proof}
We now specialize to the real rank one case. From now on, let $G$ be a connected simple Lie group with real rank one and finite center. It is well known that such a $G$ is locally isomorphic to one of the following Lie groups: $\mathrm{SO}_{0}(n,1)$, $\mathrm{SU}(n,1)$, $\mathrm{Sp}(n,1)$, with $n \geq 2$, or to the exceptional Lie group $\mathrm{F}_{4(-20)}$. The three of these groups arise as the isometry groups of the classical rank one symmetric spaces of the non-compact type. Explicitly, they are given by
\begin{align*}
	\mathrm{SO}(n,1) &= \{g \in \mathrm{SL}(n+1,\mathbb{R}) \mid g^* I_{n,1} g = I_{n,1} \},\\
	\mathrm{SU}(n,1) &= \{g \in \mathrm{SL}(n+1,\mathbb{C}) \mid g^* I_{n,1} g = I_{n,1} \},\\
	\mathrm{Sp}(n,1) &= \{g \in \mathrm{GL}(n+1,\mathbb{H}) \mid g^* I_{n,1} g = I_{n,1} \},
\end{align*}
where $I_{n,1}=\mathrm{diag}(1,\ldots,1,-1)$, i.e.~the diagonal $(n+1) \times (n+1)$-matrix with the first $n$ diagonal entries equal to $1$ and the $n+1^{\textrm{th}}$ entry equal to $-1$. These three groups are called the classical simple Lie groups with real rank one. The group $\mathrm{F}_{4(-20)}$ is an exceptional Lie group. We refer to \cite{MR1920389} for more details.
\begin{rem}
The universal covering group of $\mathrm{SO}_{0}(n,1)$ ($n \geq 3$) has finite center, the group $\mathrm{Sp}(n,1)$ ($n \geq 2$) is itself simply connected (so it is its own universal covering group) and has finite center, and the group $\mathrm{F}_{4(-20)}$ is simply connected and has trivial center (see e.g.~\cite{MR1920389} or \cite{yokota}).

The universal covering group of $\mathrm{SU}(n,1)$ ($n \geq 2$) and $\mathrm{SO}(2,1)$ have center isomorphic to $\mathbb{Z}$. Since every non-trivial quotient group of $\mathbb{Z}$ is a finite group, every group which is locally isomorphic to $\mathrm{SU}(n,1)$ ($n \geq 2$) or $\mathrm{SO}(2,1)$ must either have finite center or be isomorphic to $\widetilde{\mathrm{SU}}(n,1)$ ($n \geq 2$) or to $\widetilde{\mathrm{SO}}(2,1)$. Because of the accidental local isomorphism $\mathrm{SO}(2,1) \approx \mathrm{SU}(1,1)$, it follows that if $G$ is a connected simple Lie group with real rank one and infinite center, then it is isomorphic to $\widetilde{\mathrm{SU}}(n,1)$ for some $n \geq 1$. For details, see e.g.~\cite{MR1920389}.
\end{rem}
Much of the theory recalled below goes back to Harish-Chandra's seminal work. For the purposes of this article, however, the exposition in the monograph by Gangolli and Varadarajan \cite{MR0954385} seems more suitable, and we use this monograph as a reference.

Let $G$ be a classical simple Lie group with real rank one, and let $\Phi(G)$ be as in \eqref{eq:intparameters}. Let $G=KAN$ be an Iwasawa decomposition of $G$, and let $P= MAN$ be a minimal parabolic subgroup. Here $M$ is the centralizer of $A$ in $K$. We write $\mathfrak{a}$ for the Lie algebra of $A$ and
$\mathfrak{a}^*$ (resp. $\mathfrak{a}^*_{\mathbb{C}}$) for the dual space $\hom_{\R}(\mathfrak{a},\R)$ (resp. $\hom_{\R}(\mathfrak{a},\C)$) of $\mathfrak{a}$
(resp. $\mathfrak{a}_{\mathbb{C}}$). Finally, let 
\begin{align*}
	\rho = \frac{1}{2} \sum_{\alpha\in\Delta^+(\mathfrak{g},\mathfrak{a})}\dim (\mathfrak{g}_\alpha)\, \alpha.
\end{align*}
The induced representation $\pi_\lambda $ of the character
\begin{align*}
	P= MAN \to \mathbb{C},\, man \mapsto e^{\lambda(\log a)}
\end{align*}
to $G$, with $\lambda \in \mathfrak{a}^*_{\mathbb{C}}$, yields a not necessarily unitary
group representation (which is unitary if $\lambda \in i\mathfrak{a}^*$) with a $K$-invariant
vector $\xi_\lambda$ of norm one. Hence, the matrix coefficient 
\begin{align*}
	\psi_\lambda = \langle \pi_\lambda(\cdot) \xi_\lambda, \xi_\lambda \rangle
\end{align*}
defines a spherical function for $(G,K)$.

In what follows, we identify $\mathfrak{a}^*_{\mathbb{C}}$ with $\mathbb{C}$ via the linear
map from $\mathfrak{a}_{\mathbb{C}}^*$ to $\mathbb{C}$ mapping an element $\gamma\in \Delta^+(\mathfrak{g},\mathfrak{a})$ with $\frac{1}{2}\gamma \not\in \Delta^+(\mathfrak{g},\mathfrak{a})$ to $1$.

As above let $\left(\widehat{G}_K\right)_1$ denote the set of all irreducible unitary class one representations. We recall the following result by Kostant \cite{MR0245725} (see also \cite[Lemma 5.2]{MR1779896}).
\begin{lem}\label{lem:kostant}
\mbox{}
\begin{enumerate}[(i)]
	\item The following holds:
		\begin{equation*}
		\rho =
		\begin{cases}
			\frac{n-1}{2} & \mbox{if } G = \mathrm{SO}_0(n,1),\\
			n& \mbox{if } G = \mathrm{SU}(n,1),\\
			2n+1 & \mbox{if } G = \mathrm{Sp}(n,1),\\
			11 & \mbox{if } G = \mathrm{F}_{4(-20)}.
		\end{cases}
	\end{equation*}
	\hspace{0pt}
	\item The mapping
	\begin{align*}
		i\mathfrak{a}^*_+\cup [0,\, \rho_0(G)) \to \left(\widehat{G}_K\right)_1 \setminus \{\tau_0\},\, \lambda \mapsto \pi_\lambda
	\end{align*}
	is bijective, where
	\begin{equation*}
		\rho_0(G) =
		\begin{cases}
			\frac{n-1}{2} & \mbox{if } G = \mathrm{SO}_0(n,1),\\
			n& \mbox{if } G = \mathrm{SU}(n,1),\\
			2n-1 & \mbox{if } G = \mathrm{Sp}(n,1),\\
			5 & \mbox{if } G = \mathrm{F}_{4(-20)}.
		\end{cases}
	\end{equation*}
\end{enumerate}
\end{lem}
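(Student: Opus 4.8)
The plan is to handle the two parts by quite different means: part (i) is a finite computation from the structure theory of real rank one groups, while part (ii) is the classification of the spherical unitary dual, whose hard core is Kostant's theorem.

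For part (i), I would simply evaluate $\rho=\frac{1}{2}\sum_{\beta\in\Delta^+(\mathfrak{g},\mathfrak{a})}\dim(\mathfrak{g}_\beta)\,\beta$ after recording the restricted root data of each group. Since $G$ has real rank one, $\Delta(\mathfrak{g},\mathfrak{a})$ is of type $A_1$ (positive root $\alpha$) or of type $BC_1$ (positive roots $\alpha$ and $2\alpha$). The multiplicities are classical: for $\mathrm{SO}_0(n,1)$ the system is $A_1$ with $\dim(\mathfrak{g}_\alpha)=n-1$; for $\mathrm{SU}(n,1)$, $\mathrm{Sp}(n,1)$ and $\mathrm{F}_{4(-20)}$ it is $BC_1$ with $(\dim\mathfrak{g}_\alpha,\dim\mathfrak{g}_{2\alpha})$ equal to $(2(n-1),1)$, $(4(n-1),3)$ and $(8,7)$, respectively. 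The identification of $\mathfrak{a}^*_{\mathbb{C}}$ with $\mathbb{C}$ is forced to send $\alpha\mapsto 1$ (and hence $2\alpha\mapsto 2$), since $\alpha$ is the unique positive root $\gamma$ with $\tfrac{1}{2}\gamma\notin\Delta^+$. Substituting then gives $\rho=\tfrac{n-1}{2}$, $\rho=\tfrac12(2(n-1)+2)=n$, $\rho=\tfrac12(4(n-1)+6)=2n+1$ and $\rho=\tfrac12(8+14)=11$, which are the four values claimed. This step is routine once the multiplicities are quoted.

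For part (ii), the strategy is to translate the statement, via the dictionary recalled in Section \ref{subsec:class_one_spherical_func}, into a statement about positive-definite spherical functions, and then invoke Harish-Chandra's parametrization of spherical functions together with Kostant's unitarity analysis. First I would use that class one representations of the Gelfand pair $(G,K)$ correspond bijectively to positive-definite spherical functions, and that by Harish-Chandra's theory (as in \cite{MR0954385}) every spherical function of $(G,K)$ is one of the $\psi_\lambda$ with $\lambda\in\mathfrak{a}^*_{\mathbb{C}}\cong\mathbb{C}$, subject only to the Weyl symmetry $\psi_\lambda=\psi_{-\lambda}$ (the Weyl group being $\{\pm 1\}$ in real rank one). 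Thus $(\widehat{G}_K)_1$ is parametrized by the set of $\lambda$, taken modulo $\lambda\mapsto-\lambda$, for which $\psi_\lambda$ is positive definite, and it remains to pin down that set. For $\lambda\in i\mathfrak{a}^*$ the induced representation $\pi_\lambda$ is unitary, so $\psi_\lambda$ is positive definite; after quotienting by $\lambda\mapsto-\lambda$ this contributes exactly $i\mathfrak{a}^*_+$ (the unitary principal series). The trivial representation $\tau_0$ occurs as the spherical constituent at $\lambda=\rho$, and it is removed both from the target and, correspondingly, by taking the interval $[0,\rho_0(G))$ half-open.

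The heart of the proof, and the main obstacle, is to determine for which real $\lambda>0$ the function $\psi_\lambda$ is positive definite; this is the complementary series problem, and it is exactly the content of Kostant's theorem \cite{MR0245725}. The mechanism I would invoke realizes $\pi_\lambda$ on $L^2(K/M)$ and uses the standard intertwining operator $A(\lambda)\colon\pi_\lambda\to\pi_{-\lambda}$; for real $\lambda$ the target is the Hermitian dual of $\pi_\lambda$, so $A(\lambda)$ yields a $G$-invariant Hermitian form, and $\pi_\lambda$ is unitarizable precisely when this form is semidefinite. Since $A(\lambda)$ commutes with $K$, it acts on each $K$-type by an explicitly computable scalar (a ratio of Gamma factors coming from the $c$-function), and positivity of the form reduces to checking that all these scalars share the sign of the scalar on the $K$-fixed line. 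Locating the first $K$-type at which the sign changes pins down the upper endpoint $\rho_0(G)$; this is where the four groups genuinely diverge and where the gap $\rho_0(G)<\rho$ for $\mathrm{Sp}(n,1)$ and $\mathrm{F}_{4(-20)}$ (a manifestation of property (T)) appears, as opposed to $\rho_0(G)=\rho$ for $\mathrm{SO}_0(n,1)$ and $\mathrm{SU}(n,1)$. Rather than redo this delicate computation I would cite \cite{MR0245725}; the asserted bijection then follows by assembling the unitary principal series ($i\mathfrak{a}^*_+$), the complementary series together with the base point $\lambda=0$ ($[0,\rho_0(G))$), and noting that distinct parameters in this set give inequivalent class one representations and that they exhaust $(\widehat{G}_K)_1\setminus\{\tau_0\}$.
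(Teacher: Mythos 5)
Your proposal is correct and follows essentially the same route as the paper: the paper gives no proof of this lemma at all, recalling it as known and citing Kostant \cite{MR0245725} and Shalom \cite[Lemma 5.2]{MR1779896}, and your argument likewise rests the hard unitarizability classification in (ii) on Kostant's theorem. Your part (i) is just the routine verification the paper leaves implicit, and the restricted root data you quote (type $A_1$ with multiplicity $n-1$ for $\mathrm{SO}_0(n,1)$; type $BC_1$ with multiplicities $(2(n-1),1)$, $(4(n-1),3)$, $(8,7)$ for $\mathrm{SU}(n,1)$, $\mathrm{Sp}(n,1)$, $\mathrm{F}_{4(-20)}$) and the resulting values of $\rho$ are all correct under the paper's normalization $\alpha \mapsto 1$.
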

We now characterize which of the class one representations $\pi_\lambda$ recalled above, with $\lambda \in [0,\rho_0(G))$, are $L^{p+}$-representations.
\begin{prp}\label{prp:int_of_spher_func}
	Let $G$ be $\mathrm{SO}_0(n,1)$, $\mathrm{SU}(n,1)$, $\mathrm{Sp}(n,1)$ or 
	$\mathrm{F}_{4(-20)}$, and
	let $\lambda \in [0,\rho_0(G))$. Then the class one representation $\pi_\lambda$ is an
	$L^{p+}$-representation if and only if $\lambda$ satisfies
	\[
		p(\rho - \lambda) \geq 2\rho.
	\]
\end{prp}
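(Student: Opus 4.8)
The plan is to reduce the $L^{p+}$-integrability of the representation $\pi_\lambda$ to the $L^{p+\varepsilon}$-integrability of its single diagonal matrix coefficient $\psi_\lambda$, and then to estimate the asymptotic behaviour of $\psi_\lambda$ via the polar decomposition \eqref{eq:polardecompOfInt}. Since $\pi_\lambda$ is an irreducible class one representation with $K$-invariant unit vector $\xi_\lambda$, and since $G$ is a Kunze-Stein group (being a connected semisimple Lie group with finite center), Proposition \ref{prp:KS_int_irr_rep} tells us that $\pi_\lambda$ is an $L^{p+}$-representation if and only if $\psi_\lambda \in L^{p+\varepsilon}(G)$ for every $\varepsilon > 0$. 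So everything comes down to understanding for which exponents $r$ one has $\psi_\lambda \in L^r(G)$.

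The first main step is to make the asymptotics of the spherical function $\psi_\lambda$ explicit. For real $\lambda \in [0,\rho)$ the Harish-Chandra spherical function $\psi_\lambda$ is $K$-bi-invariant, so it is determined by its restriction to $\overline{A^+}$, and its classical asymptotic expansion gives, as $a \to \infty$ in $A^+$, a decay of the order $e^{-(\rho-\lambda)(\log a)}$ (up to polynomial factors coming from the $c$-function/Harish-Chandra expansion; since we work in the half-open range $\lambda < \rho$ the leading exponent $\rho-\lambda$ is strictly positive and simple, so these polynomial corrections will not affect the borderline integrability). I would record this as the statement that $\psi_\lambda(a) \asymp e^{-(\rho-\lambda)(\log a)}$ on the far part of $\overline{A^+}$, citing the relevant asymptotic result from \cite{MR0954385}.

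The second main step is to combine this with the Jacobian $J(a)$ from \eqref{eq:j}, which grows like $e^{2\rho(\log a)}$ as $a \to \infty$. Plugging the $K$-bi-invariance into \eqref{eq:polardecompOfInt}, the $K$-integrations contribute only finite factors, and the integral $\int_G |\psi_\lambda|^r \, d\mu_G$ converges if and only if $\int_{\overline{A^+}} |\psi_\lambda(a)|^r J(a)\, d\mu_A(a)$ converges. Since $A$ is one-dimensional (real rank one), this is a one-variable integral whose integrand behaves like $e^{[2\rho - r(\rho-\lambda)](\log a)}$ near infinity, while near the origin the integrand is harmless because $\psi_\lambda$ is bounded and $J$ vanishes. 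The integral therefore converges precisely when the exponent is negative, i.e.~when $r(\rho-\lambda) > 2\rho$; requiring this for all $r > p$ (equivalently $r = p+\varepsilon$) yields the borderline condition $p(\rho-\lambda) \geq 2\rho$, which is exactly the claimed inequality. The threshold $r = 2\rho/(\rho-\lambda)$ is covered by the ``$+$'' in $L^{p+}$, which is why the inequality is non-strict.

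The step I expect to be the main obstacle is pinning down the exact asymptotic leading term of $\psi_\lambda$ for real parameters in the complementary-series range, and in particular justifying that the subexponential (polynomial-in-$\log a$) corrections do not shift the borderline exponent. For $\lambda$ in the open interval $(0,\rho)$ the leading exponential $e^{-(\rho-\lambda)(\log a)}$ dominates a genuinely smaller term $e^{-(\rho+\lambda)(\log a)}$, so the integrability threshold is governed cleanly by $\rho-\lambda$; the delicate points are the endpoint $\lambda = 0$ (where the two exponents coincide and logarithmic factors appear, but these only weaken integrability by a factor that is absorbed into the ``$+$'') and, more importantly, citing the asymptotics in a form uniform enough to control $\int |\psi_\lambda|^r J$ at the precise exponent. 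Once the asymptotic estimate is quoted in the sharp form, the remaining computation is the elementary one-dimensional integral comparison described above.
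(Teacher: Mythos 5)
Your proposal is correct and follows essentially the same route as the paper: the paper's proof likewise quotes the asymptotics $\psi_\lambda(a) \sim e^{(\lambda-\rho)\log a}$ and $J(a) \sim e^{2\rho \log a}$ on $A^+$ and concludes via the integral formula \eqref{eq:polardecompOfInt}. The only differences are that you make explicit the reduction to the single coefficient $\psi_\lambda$ via Proposition \ref{prp:KS_int_irr_rep} and the absorption of polynomial/logarithmic corrections into the ``$+$'', both of which the paper leaves implicit.
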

The proof of this result follows from \cite[Theorems 8.47 and 8.48]{MR0855239}. Specifically, the result we need can be found in \cite[p.~847--848]{MR1779896}. It follows from the known asymptotic behaviour of 
$\psi_\lambda = \langle \pi_\lambda(\cdot) \xi_\lambda,\xi_\lambda \rangle$ on $A^+$. For the 
convenience of the reader, we sketch the proof.
\begin{proof}
	The asymptotic behaviour of the spherical function $\psi_\lambda$ on $A^+$ is given by
	\begin{align*}
		\psi_{\lambda}(a) \sim e^{(\lambda-\rho)\log a}
	\end{align*}
	as $a\to \infty$ (see \cite[p. 847]{MR1779896}), and the asymptotic behaviour of the function $J$ from \eqref{eq:j} on $A^+$ is given by
	\begin{align*}
		J(a) \sim e^{2\rho \log a}
	\end{align*}
	as $a\to \infty$ (see \cite[p. 848]{MR1779896}). Using
	 (\ref{eq:polardecompOfInt}), it follows that
	\begin{align*}
		\psi_{\lambda} \in L^{p+\varepsilon}(G)
	\end{align*}
	for all $\varepsilon > 0$ and all $\lambda \in [0,\rho_0(G)]$ if and only if
	\[
		p(\rho - \lambda)\geq 2 \rho.
	\]
\end{proof}
For any Gelfand pair $(G,K)$, let $\Phi_K(G) \in [1,\infty]$ be defined as
\begin{align*}
	\Phi_K(G) := \inf\{p \in [1,\infty] \mid \forall \pi \in (\widehat{G}_K)_1\backslash \lbrace \tau_0 \rbrace,\, \pi \mbox{ is  an } L^{p+}\textrm{-representation}\},
\end{align*}
where $\tau_0$ denotes the trivial unitary representation of $G$.
It is clear that $\Phi_K(G) \leq \Phi(G)$.
\begin{prp}\label{prp:int_const_spherical_vs_global}
	Let $G$ be a connected simple Lie group with real rank one and finite center that is not locally isomorphic to $\mathrm{F}_{4(-20)}$. Then, we have $\Phi_K(G) = \Phi(G)$.
\end{prp}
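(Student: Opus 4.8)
The plan is to prove the non-trivial inequality $\Phi(G) \le \Phi_K(G)$; the reverse inequality $\Phi_K(G) \le \Phi(G)$ is immediate, since $(\widehat{G}_K)_1 \subseteq \widehat{G}$ makes the defining condition for $\Phi(G)$ more restrictive than that for $\Phi_K(G)$, so the set of admissible $p$ for $\Phi(G)$ is contained in that for $\Phi_K(G)$ and the infima compare accordingly. By definition of $\Phi(G)$, it then suffices to show that for every $p > \Phi_K(G)$, each non-trivial irreducible unitary representation $\pi$ of $G$ is an $L^{p+}$-representation. Since $G$ is a connected semisimple Lie group with finite center, $\pi$ is admissible (Harish-Chandra), so there is a nonzero $K$-finite vector $\xi \in \mathcal{H}_\pi$. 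Because $G$ is a Kunze-Stein group, Proposition \ref{prp:KS_int_irr_rep} reduces the claim to a single diagonal matrix coefficient: $\pi$ is an $L^{p+}$-representation if and only if $\pi_{\xi,\xi} \in L^{p+\epsilon}(G)$ for all $\epsilon > 0$. Everything thus comes down to the integrability of the one function $\pi_{\xi,\xi}$.

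To control this integrability, I would follow the proof of Proposition \ref{prp:int_of_spher_func}, with $\pi_{\xi,\xi}$ in place of the spherical function $\psi_\lambda$. Using the polar decomposition \eqref{eq:polardecompOfInt} together with the asymptotics $J(a) \sim e^{2\rho \log a}$ of the Jacobian \eqref{eq:j}, the $L^{p+\epsilon}$-integrability of $\pi_{\xi,\xi}$ is determined by the rate of decay of $|\pi_{\xi,\xi}|$ on $A^+$. By Harish-Chandra's theory of the leading exponent (\cite[Theorems 8.47 and 8.48]{MR0855239}; cf.~\cite[p.~847--848]{MR1779896}), the decay of a $K$-finite matrix coefficient of $\pi$ is governed by the leading exponents of $\pi$, and for a unitary $\pi$ the real part of the relevant exponent, written as $\lambda - \rho$, satisfies $\mathrm{Re}\,\lambda \le \rho_0(G)$, the tempered representations accounting for the case $\mathrm{Re}\,\lambda = 0$ (which yields membership in $L^{2+}(G)$). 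Hence $\pi_{\xi,\xi}$ obeys the same inequality $p(\rho - \lambda) \ge 2\rho$ that governs the spherical functions in Proposition \ref{prp:int_of_spher_func}, with $\lambda$ ranging over the set $i\mathfrak{a}^* \cup [0,\rho_0(G)]$ realized by the class-one representations of Lemma \ref{lem:kostant}. Since this is exactly the range producing $\Phi_K(G)$, it follows that $\pi_{\xi,\xi} \in L^{\Phi_K(G)+\epsilon}(G)$ for all $\epsilon > 0$, and therefore $\pi$ is an $L^{\Phi_K(G)+}$-representation.

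The main obstacle is the comparison underlying the second step: the assertion that no non-trivial irreducible unitary representation has a matrix coefficient decaying more slowly than the spherical endpoint, i.e.~that the real part of the leading exponent of every such representation is bounded by $\rho_0(G)$. This is the classification-theoretic input that separates the spherical constant from the global one, and for the three classical groups it is consistent with (and for $\mathrm{Sp}(n,1)$ recovers) the value $\Phi(\mathrm{Sp}(n,1)) = 2n+1$ recorded in \eqref{eq:intparameters}. For $F_{4(-20)}$ this control over the non-spherical part of the unitary dual is not available to us, which is why the exceptional group is excluded here and yields only the partial results of Theorem \ref{thm:exoticF4}. Finally, the statement for an arbitrary connected simple Lie group $H$ with finite center that is locally isomorphic to a classical group $G$ needs no extra work: the ingredients used above---the Kunze-Stein property, admissibility, the polar decomposition, and the asymptotics of $J$ and of the $K$-finite matrix coefficients---depend only on the Lie algebra and hence hold verbatim for $H$; alternatively, one transports $\Phi_K$ along finite covers via Lemma \ref{prp:bij_class_one_local_iso_grps}.
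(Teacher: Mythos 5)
There is a genuine gap, and it sits exactly where you yourself flag ``the main obstacle'': the claim that for every non-trivial irreducible unitary representation the real part of the leading exponent satisfies $\mathrm{Re}\,\lambda \leq \rho_0(G)$. This does not follow from the results you cite: \cite[Theorems 8.47 and 8.48]{MR0855239} relate the decay of $K$-finite matrix coefficients of a \emph{given} representation to its exponents, but they impose no upper bound on which exponents occur across the unitary dual, and in particular no bound by the spherical endpoint $\rho_0(G)$. Note that the only case where anything is at stake is $\mathrm{Sp}(n,1)$: for groups locally isomorphic to $\mathrm{SO}_0(n,1)$ or $\mathrm{SU}(n,1)$ one has $\Phi_K(G)=\infty$ by Lemma \ref{prp:bij_class_one_local_iso_grps} and Proposition \ref{prp:int_of_spher_func}, so $\Phi(G)\leq\Phi_K(G)$ is vacuous. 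For $\mathrm{Sp}(n,1)$, your asserted exponent bound $\mathrm{Re}\,\lambda \leq \rho_0 = 2n-1$ is (via the very asymptotics you invoke) \emph{equivalent} to $\Phi(\mathrm{Sp}(n,1))=2n+1$, which is Li's theorem \cite{MR1355801} --- precisely the external input recorded in \eqref{eq:intparameters}. Saying your bound is ``consistent with'' that value inverts the logic: the value is the deep theorem, and your bound is a restatement of it, not a consequence of Harish-Chandra's general theory. The clearest evidence that no soft argument of this kind exists is $\mathrm{F}_{4(-20)}$: if the exponent bound followed from general asymptotic theory, your argument would apply verbatim there and yield $\Phi(\mathrm{F}_{4(-20)})=\Phi_K(\mathrm{F}_{4(-20)})=\frac{11}{3}$, whereas the paper states this value is unknown, and the only general tool (Cowling's Lemma \ref{lem:cowling2p}) loses a factor of $2$ --- which is exactly why Theorem \ref{thm:exoticF4} leaves a gap between $\frac{11}{3}$ and $\frac{22}{3}$.

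The correct route (the paper's) treats the known value as the input rather than the output: $\Phi(\mathrm{Sp}(n,1))=2n+1$ from \eqref{eq:intparameters}, while Proposition \ref{prp:int_of_spher_func} and Lemma \ref{lem:kostant} give $\Phi_K(\mathrm{Sp}(n,1)) = \sup_{\lambda<2n-1} \frac{2(2n+1)}{(2n+1)-\lambda} = 2n+1$, so equality holds for $\mathrm{Sp}(n,1)$ itself; then, since $\mathrm{Sp}(n,1)$ is simply connected, any connected $G$ locally isomorphic to it is a quotient of it, whence $\Phi(G)\leq\Phi(\mathrm{Sp}(n,1))=\Phi_K(\mathrm{Sp}(n,1))=\Phi_K(G)$, the last equality by Lemma \ref{prp:bij_class_one_local_iso_grps}, and the trivial inequality $\Phi_K(G)\leq\Phi(G)$ (which you do prove correctly) closes the argument. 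Your reduction to a single matrix coefficient via Proposition \ref{prp:KS_int_irr_rep} is fine, and your final remark that the ingredients ``depend only on the Lie algebra'' is too glib --- a priori $\Phi$ is not a local-isomorphism invariant; it is exactly this covering/quotient argument that makes it one. If you replace the unsupported exponent bound by the citation of \eqref{eq:intparameters} and the quotient argument, your proof becomes the paper's.
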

\begin{proof}
	Suppose that $G$ is a connected Lie group with finite center locally isomorphic to $\mathrm{SO}_0(n,1)$ or $\mathrm{SU}(n,1)$. Then Lemma \ref{prp:bij_class_one_local_iso_grps} and Proposition
	\ref{prp:int_of_spher_func} immediately imply that $\Phi_K(G) = \infty\geq \Phi(G)$.
	
	From Proposition \ref{prp:int_of_spher_func} and the identity $\Phi(\mathrm{Sp}(n,1)) = 2n+1$, it follows that $\Phi(\mathrm{Sp}(n,1)) = \Phi_K(\mathrm{Sp}(n,1))$. Since $\mathrm{Sp}(n,1)$ is
	simply connected, every connected Lie group $G$ that is locally isomorphic to $\mathrm{Sp}(n,1)$
	is isomorphic to a quotient group of $\mathrm{Sp}(n,1)$, with $\mathrm{Sp}(n,1)$ as its universal covering group.
	Hence, we have $\Phi(G)\leq \Phi(\mathrm{Sp}(n,1)) = \Phi_K(\mathrm{Sp}(n,1)) = \Phi_K(G)$.
	Here the last equality follows by Lemma \ref{prp:bij_class_one_local_iso_grps} again.
\end{proof}
\begin{rem}
	Note that Proposition \ref{prp:int_const_spherical_vs_global} and Lemma \ref{prp:bij_class_one_local_iso_grps} imply that $\Phi(G) = \Phi(G')$ for locally isomorphic
	connected Lie groups $G$ and $G'$ with real rank one and finite center whenever $G$ is
	not locally isomorphic to $\mathrm{F}_{4(-20)}$.
\end{rem}
We now give the proof of Theorem \ref{thm:classicalrealrankone}.
\begin{proof}[Proof of Theorem \ref{thm:classicalrealrankone}]
	For the proof of the first assertion we only need to show that
	\begin{align*}
		\widehat{G}_{L^{p+}}\subsetneq \widehat{G}_{L^{q+}}
	\end{align*}
	for $2\leq p < q \leq \Phi(G)$ (see Lemma \ref{lem:lp_closed_prop}).
	To this end, let $p,q \in [2,\Phi(G)]$ with $p<q$.
	By Proposition \ref{prp:KS_int_irr_rep},
	an irreducible unitary representation is an $L^{p+}$-representation if and only if a non-trivial vector 
	state of the representation lies in $L^{p+\varepsilon}(G)$ for all $\varepsilon > 0$.
	Furthermore, the positive definite spherical functions are in one-to-one correspondence with class one
	irreducible unitary representations (see Section \ref{subsec:class_one_spherical_func}). 
	By Lemma \ref{lem:lp_closed_prop}, together with the fact that
	the GNS-construction of every positive definite spherical function is an irreducible unitary
	representation, it suffices to show that there is a 
	positive definite spherical function $\psi$ for the Gelfand pair $(G,K)$ that lies in $L^{p+\varepsilon}(G)$ for all $\varepsilon > 0$ but not in 
	$L^{q+\varepsilon}(G)$ for all $\varepsilon > 0$.
	Lemma \ref{prp:bij_class_one_local_iso_grps} implies that we can restrict ourselves
	to the classical cases, i.e.~the cases where $G$ is equal to $\mathrm{SO}_0 (n,1)$, $\mathrm{SU}(n,1)$ or
	$\mathrm{Sp}(n,1)$.	Now Lemma \ref{lem:kostant} and Proposition \ref{prp:int_of_spher_func} complete the proof of
	 the first assertion. The second assertion follows from the definition of $\Phi(G)$.
\end{proof}
Note that for every connected simple Lie group $G$ with finite center and property (T), it was already known from the work of Cowling \cite{MR0560837} that there exists a $p_0 \in [2,\infty)$ (depending on $G$) such that every matrix coefficient of a unitary representation of $G$ is in $L^p(G)$ for every $p \geq p_0$.

Let us now consider the exceptional Lie group $\mathrm{F}_{4(-20)}$. Unfortunately, we cannot obtain a description as complete as for the classical Lie groups as in Theorem \ref{thm:classicalrealrankone}. The reason for this is that we do not know what the value of $\Phi(\mathrm{F}_{4(-20)})$ is. However, from considering the class one representations of $\mathrm{F}_{4(-20)}$ in combination of a result of Cowling, we still obtain a partial result.
\begin{thm}	\label{thm:exoticF4}
	For $2 \leq q < p \leq \frac{11}{3}$, the canonical quotient map
	\[
		C^*_{L^{p+}}(\mathrm{F}_{4(-20)}) \twoheadrightarrow C^*_{L^{q+}}(\mathrm{F}_{4(-20)})
	\]
	has non-trivial kernel. Furthermore, for every $p,q\in [\frac{22}{3},\infty)$, we have 
	\[
		C^*_{L^{p+}}(\mathrm{F}_{4(-20)}) =  C^*_{L^{q+}}(\mathrm{F}_{4(-20)}).
	\]
\end{thm}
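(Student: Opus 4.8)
The plan is to mirror the strategy used for the classical groups in Theorem \ref{thm:classicalrealrankone}, but to work only with the class one representations of $\mathrm{F}_{4(-20)}$, since the global constant $\Phi(\mathrm{F}_{4(-20)})$ is unknown. By Lemma \ref{lem:lp_closed_prop}, it suffices to exhibit, for each $2 \leq q < p \leq \tfrac{11}{3}$, a positive definite spherical function for the Gelfand pair $(\mathrm{F}_{4(-20)}, K)$ that lies in $L^{p+\varepsilon}$ for all $\varepsilon > 0$ but fails to lie in $L^{q+\varepsilon}$ for all $\varepsilon > 0$; this separates the ideals $\widehat{G}_{L^{p+}}$ and $\widehat{G}_{L^{q+}}$ and hence forces the quotient map to have non-trivial kernel. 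The starting data are Lemma \ref{lem:kostant}, which gives $\rho = 11$ and $\rho_0(\mathrm{F}_{4(-20)}) = 5$ together with the bijective parametrisation of the spherical unitary dual by $i\mathfrak{a}^*_+ \cup [0,5)$, and Proposition \ref{prp:int_of_spher_func}, which states that $\pi_\lambda$ (for $\lambda \in [0,5)$) is an $L^{p+}$-representation if and only if $p(\rho - \lambda) \geq 2\rho = 22$.

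First I would translate the integrability condition into an explicit relation between $p$ and $\lambda$. For $\lambda \in [0,5)$ and $\rho = 11$, the representation $\pi_\lambda$ is $L^{p+}$ precisely when $p(11-\lambda) \geq 22$, i.e.~when $\lambda \leq 11 - \tfrac{22}{p} = 11\bigl(1 - \tfrac{2}{p}\bigr)$. As $p$ decreases toward $2$ this threshold decreases to $0$, and at $p = \tfrac{11}{3}$ one gets $\lambda \leq 11(1 - \tfrac{6}{11}) = 5$. Thus, as $p$ ranges over $[2,\tfrac{11}{3}]$, the critical value $\lambda(p) := 11 - \tfrac{22}{p}$ ranges over $[0,5)$, exactly matching the admissible parameter interval from Lemma \ref{lem:kostant}(ii). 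This is the key point: the continuum of spherical representations indexed by $[0,5)$ is precisely rich enough to separate all $p$ up to $\tfrac{11}{3}$, and no further.

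Next, given $2 \leq q < p \leq \tfrac{11}{3}$, I would choose a parameter $\lambda_0$ with $\lambda(q) < \lambda_0 \leq \lambda(p)$, which is possible since $\lambda(q) < \lambda(p)$ and both lie in $[0,5)$. Then $\pi_{\lambda_0}$ is an $L^{p+}$-representation (as $\lambda_0 \leq \lambda(p)$) but not an $L^{q+}$-representation (as $\lambda_0 > \lambda(q)$), by Proposition \ref{prp:int_of_spher_func}. Its associated spherical function $\psi_{\lambda_0}$ is the desired separating positive definite spherical function, and by the correspondence recalled in Section \ref{subsec:class_one_spherical_func} together with Lemma \ref{lem:lp_closed_prop} this gives $\widehat{G}_{L^{p+}} \subsetneq \widehat{G}_{L^{q+}}$, proving the first assertion.

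For the second assertion, the parametrisation of the spherical unitary dual covers $L^p$-integrability of class one representations only up to $p = \tfrac{11}{3}$, so an input beyond Lemma \ref{lem:kostant} is required; this is where Cowling's result on groups with property (T) enters. By \cite{MR0560837}, there is a $p_0 \in [2,\infty)$ such that every matrix coefficient of every unitary representation of $\mathrm{F}_{4(-20)}$ without invariant vectors lies in $L^{p_0}(\mathrm{F}_{4(-20)})$; the relevant value for $\mathrm{F}_{4(-20)}$ is $p_0 = \tfrac{22}{3} = 2\rho/(\rho - \rho_0)$ evaluated at the endpoint $\lambda = \rho_0 = 5$, giving $p_0(\rho - \rho_0) = p_0 \cdot 6 = 44 = 4\rho$, hence the sharp $L^{p}$-bound $\tfrac{22}{3}$. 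Once every non-trivial irreducible unitary representation is an $L^{\frac{22}{3}+}$-representation, the ideals $\widehat{G}_{L^{p+}}$ all coincide for $p \in [\tfrac{22}{3}, \infty)$, and hence so do the completions $C^*_{L^{p+}}(\mathrm{F}_{4(-20)})$, as claimed. The main obstacle is precisely this second part: the class one representations alone do not control $L^p$-integrability past $\tfrac{11}{3}$, so the stabilisation at $\tfrac{22}{3}$ genuinely requires the uniform decay estimate for all (not merely spherical) matrix coefficients supplied by Cowling, and identifying the correct numerical endpoint requires care.
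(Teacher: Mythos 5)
Your proposal follows essentially the same route as the paper: the first assertion via separating spherical parameters using Lemma \ref{lem:kostant} and Proposition \ref{prp:int_of_spher_func}, and the second via Cowling's uniform decay result, which doubles the spherical exponent $\tfrac{11}{3}$ to $\tfrac{22}{3}$ for all non-trivial irreducible representations --- exactly the paper's Lemma \ref{lem:cowling2p}. Two cosmetic slips worth fixing: your formula $p_0 = 2\rho/(\rho-\rho_0)$ evaluates to $\tfrac{11}{3}$, not $\tfrac{22}{3}$ (the correct doubled relation is $p_0 = 4\rho/(\rho-\rho_0)$, which you do use in the line $p_0\cdot 6 = 44$), and with your convention $q < p$ the proper inclusion should read $\widehat{G}_{L^{q+}} \subsetneq \widehat{G}_{L^{p+}}$ rather than the reverse.
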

For the proof of this theorem, we will use the following Lemma, which is a special case of a result due to Cowling (see \cite[Lemme 2.2.6]{MR0560837}).
\begin{lem} \label{lem:cowling2p}
Let $G$ be a connected simple Lie group with finite center, and let $K$ be a maximal compact subgroup of $G$. Suppose that there exists a $p \in [2,\infty)$ such that all non-constant positive-definite spherical functions of the Gelfand pair $(G,K)$ belong to $L^{p+\varepsilon}(G)$ for all $\varepsilon > 0$. Then every non-trivial irreducible unitary representation of $G$ is an $L^{2p+}$-representation.
\end{lem}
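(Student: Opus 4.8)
The plan is to reduce the statement to the integrability of a single diagonal matrix coefficient and then to exploit that squaring a matrix coefficient of $\pi$ produces a diagonal matrix coefficient of $\pi\otimes\overline\pi$, which can be dominated by a spherical function. First, since $G$ is a connected semisimple Lie group with finite center, it is a Kunze-Stein group, so Proposition \ref{prp:KS_int_irr_rep} applies: a non-trivial irreducible unitary representation $\pi\colon G\to\mathcal B(\mathcal H)$ is an $L^{2p+}$-representation as soon as $\pi_{\xi,\xi}\in L^{2p+\varepsilon}(G)$ for all $\varepsilon>0$ for a \emph{single} nonzero vector $\xi$. Since irreducible unitary representations of $G$ are admissible (Harish-Chandra), I would choose $\xi$ to be a nonzero $K$-finite unit vector.

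The key observation is the identity
\[
	|\pi_{\xi,\xi}(g)|^2 = \langle (\pi\otimes\overline\pi)(g)(\xi\otimes\overline\xi),\, \xi\otimes\overline\xi\rangle,
\]
which exhibits $|\pi_{\xi,\xi}|^2$ as a \emph{diagonal} (hence positive-definite) matrix coefficient of the unitary representation $\sigma=\pi\otimes\overline\pi$ associated to the $K$-finite vector $v=\xi\otimes\overline\xi$. I would then invoke the asymptotic theory of $K$-finite matrix coefficients of semisimple Lie groups, which is the content of Cowling's estimate \cite[Lemme 2.2.6]{MR0560837}, to dominate this $K$-finite coefficient pointwise: there is a constant $C$ and a non-constant positive-definite spherical function $\psi=\psi_\lambda$ of $(G,K)$, with $\lambda\in[0,\rho_0(G))$ as in Lemma \ref{lem:kostant}, such that
\[
	|\pi_{\xi,\xi}(g)|^2 \le C\,\psi(g) \qquad (g\in G).
\]
Concretely, writing $\pi_{\xi,\xi}(a)\sim e^{(\nu-\rho)\log a}$ for the leading asymptotics on $A^+$, one has $|\pi_{\xi,\xi}(a)|^2\sim e^{(2\nu-2\rho)\log a}$, and one selects $\lambda$ large enough that its leading exponent $\lambda-\rho$ dominates $2\nu-2\rho$; because $\pi$ is non-trivial, $\psi$ can be taken to be a genuinely decaying (hence non-constant) spherical function rather than the constant function $1$.

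Finally I would conclude. Since $\psi$ is a non-constant positive-definite spherical function, the hypothesis of the lemma gives $\psi\in L^{p+\varepsilon}(G)$ for all $\varepsilon>0$. Combining this with the pointwise bound yields
\[
	\int_G |\pi_{\xi,\xi}|^{2(p+\varepsilon)}\,d\mu_G \le C^{p+\varepsilon}\int_G \psi^{p+\varepsilon}\,d\mu_G < \infty,
\]
so $\pi_{\xi,\xi}\in L^{2p+\varepsilon'}(G)$ for all $\varepsilon'>0$, and by Proposition \ref{prp:KS_int_irr_rep} the representation $\pi$ is an $L^{2p+}$-representation. The factor of two between the hypothesis (on spherical functions) and the conclusion (on all irreducible representations) is exactly the effect of passing from $\pi_{\xi,\xi}$ to $|\pi_{\xi,\xi}|^2$.

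The main obstacle is the pointwise spherical domination invoked in the second step. The subtlety is twofold: the vector $v=\xi\otimes\overline\xi$ is $K$-finite but \emph{not} $K$-invariant, so $|\pi_{\xi,\xi}|^2$ is not itself a spherical function and must instead be controlled via Harish-Chandra's asymptotic estimates for $K$-finite matrix coefficients; and one must know that the leading exponent of an arbitrary non-trivial irreducible unitary representation does not exceed that of the worst non-trivial spherical representation, namely the complementary-series endpoint $\rho_0(G)$, so that the dominating $\psi$ can indeed be chosen among the non-constant spherical functions covered by the hypothesis. Both facts belong to the representation theory underlying Cowling's \cite[Lemme 2.2.6]{MR0560837}, and it is precisely this input that lets the argument bypass any direct analysis of the non-spherical part of $\widehat G$.
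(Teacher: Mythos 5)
A preliminary remark on the comparison: the paper does not prove this lemma at all --- it is quoted as a special case of \cite[Lemme 2.2.6]{MR0560837} --- so your proposal has to stand as a self-contained argument (or as a correct reduction to what Cowling actually proves). Your outer steps are fine: by Proposition \ref{prp:KS_int_irr_rep} it suffices to show $\pi_{\xi,\xi}\in L^{2p+\varepsilon}(G)$ for all $\varepsilon>0$ for a single $K$-finite unit vector $\xi$, and $|\pi_{\xi,\xi}|^2$ is indeed a diagonal coefficient of $\pi\otimes\overline{\pi}$ at the vector $\xi\otimes\overline{\xi}$. The genuine gap is your domination step, and it is logical, not computational. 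To select $\lambda\in[0,\rho_0(G))$ with $\lambda-\rho\geq 2(\nu-\rho)$ you need the a priori bound $2\nu-\rho<\rho_0(G)$ on the leading exponent $\nu$ of an \emph{arbitrary} non-trivial irreducible unitary representation, whereas the hypothesis of the lemma controls only the spherical (class one) ones. The fact you invoke to close this gap --- that $\nu\leq\rho_0(G)$ for every non-trivial irreducible unitary representation --- is not ``part of the representation theory underlying'' Cowling's lemma: it is a hard theorem known case by case (for the classical rank one groups it is Li's result \cite{MR1355801}, which is where \eqref{eq:intparameters} comes from), and for $\mathrm{F}_{4(-20)}$, the only group to which the paper applies this lemma, it is open --- that is precisely the point of the remark following Theorem \ref{thm:exoticF4}. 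The circularity is quantitative: for $\mathrm{F}_{4(-20)}$ one has $\rho=11$, $\rho_0=5$, so your selection of $\lambda$ requires $\nu<8$, while the conclusion of the lemma ($\pi$ is an $L^{\frac{22}{3}+}$-representation) is equivalent, at the level of leading exponents, to $\nu\leq 8$. Moreover, if your postulated input $\nu\leq\rho_0(G)$ were available, then $\pi_{\xi,\xi}$ itself would lie in $L^{p+\varepsilon}(G)$, so $\pi$ would be an $L^{p+}$-representation and the factor $2$ would be superfluous; the whole purpose of Cowling's trick is to avoid any such input. Finally, in the cases where your selection is unproblematic ($\rho_0(G)=\rho$, i.e.\ $\mathrm{SO}_0(n,1)$ and $\mathrm{SU}(n,1)$), the hypothesis of the lemma is satisfied by no finite $p$, so there the statement is vacuous: in every case of substance, step 3 assumes what is to be proved.

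The correct mechanism runs the comparison in the opposite order, so that no knowledge of $\nu$ enters. Since a non-compact connected simple Lie group has no non-trivial finite-dimensional unitary representations, $\pi$ is infinite-dimensional, hence $\sigma:=\pi\otimes\overline{\pi}$ has no non-zero invariant vectors. With $v=\xi\otimes\overline{\xi}$ and $P$ the orthogonal projection onto the $\sigma(K)$-fixed vectors (note $\langle Pv,v\rangle=\int_K|\pi_{\xi,\xi}(k)|^2\,\mathrm{d}\mu_K(k)>0$), the function
\[
	g\mapsto \int_K\int_K|\pi_{\xi,\xi}(k_1gk_2)|^2\,\mathrm{d}\mu_K(k_1)\,\mathrm{d}\mu_K(k_2)=\sigma_{Pv,Pv}(g)
\]
is a bi-$K$-invariant positive definite coefficient of a representation without invariant vectors; by Godement's theorem it is a superposition of \emph{non-constant} positive definite spherical functions, and the hypothesis (together with a uniform envelope bound over the non-constant spherical functions, which in rank one follows from monotonicity of $\psi_\lambda$ in the real parameter) places it in $L^{p+\varepsilon}(G)$ for all $\varepsilon>0$. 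The $K$-finiteness of $\xi$ is then used to \emph{undo} the averaging: the functions $(k_1,k_2)\mapsto\pi_{\xi,\xi}(k_1gk_2)$, $g\in G$, all lie in one fixed finite-dimensional subspace of $C(K\times K)$, on which the supremum norm and the $L^2(K\times K)$-norm are equivalent, whence $|\pi_{\xi,\xi}(g)|^2\leq C\,\sigma_{Pv,Pv}(g)$ pointwise. Your step 4 then goes through verbatim, and the exponent bound $2\nu\leq\rho+\rho_0(G)$ comes out as the \emph{conclusion} of the argument rather than an ingredient. Note also that the dominating function produced this way is in general the envelope $\psi_{\rho_0}$, which by Lemma \ref{lem:kostant} is not itself positive definite; so a pointwise bound by a single non-constant positive definite spherical function, as you stated it, can fail in the borderline case, even though integrability of $\psi_{\rho_0}$ does follow from the hypothesis by a limiting argument.
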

\begin{proof}[Proof of Theorem \ref{thm:exoticF4}]
The first assertion follows in the same way as the first part of Theorem \ref{thm:classicalrealrankone}, relying on Lemma \ref{lem:kostant} and Proposition \ref{prp:int_of_spher_func}.

For the second assertion, we use Lemma \ref{lem:cowling2p}. Indeed, note that by Proposition \ref{prp:int_of_spher_func}, every non-trivial class one representation of $G=F_{4(-20)}$ is an $L^{\frac{11}{3}+}$-representation, so every non-constant positive-definite spherical function belongs to $L^{\frac{11}{3}+\varepsilon}$ for all $\varepsilon > 0$. Hence, by the lemma, every non-trivial irreducible unitary representation of $G$ is an $L^{\frac{22}{3}+}$-representation, which implies the second assertion.
\end{proof}
In order to improve Theorem \ref{thm:exoticF4}, one would need to study the asymptotic behaviour of the isolated series representations.

\end{document}